\documentclass{amsart}
\usepackage{amssymb}
\usepackage{amscd}
\usepackage{graphicx}
\usepackage{amsmath}
\usepackage{setspace}
\usepackage[all]{xy}
\usepackage{color}
\usepackage{enumerate}

\newtheorem{theorem}{Theorem}[section]

\newtheorem{corollary}[theorem]{Corollary}

\newtheorem{lemma}[theorem]{Lemma}

\newtheorem{proposition}[theorem]{Proposition}

\theoremstyle{remark}
\newtheorem{definition}[theorem]{Definition}
\newtheorem{example}[theorem]{Example}
\newtheorem{remark}[theorem]{Remark}

\newcommand{\CB}{\mathcal{B}}

\newcommand{\CALD}{\mathcal{D}}

\newcommand{\CI}{\mathcal{I}}

\newcommand{\CT}{\mathcal{T}}

\newcommand{\BBQ}{{\mathbb Q}}

\newcommand{\BBZ}{{\mathbb Z}}
\newcommand{\Z}{{\mathbb Z}}
\newcommand{\PP}{{\mathbb P}}

\newcommand{\Hom}{\mathrm{Hom}}
\newcommand{\Ima}{\mathrm{Im}}

\newcommand{\Ker}{\mathrm{Ker}}

\newcommand{\Ext}{\mathrm{Ext}}
\newcommand{\Tor}{\mathrm{Tor}}

\newcommand{\Gen}{\mathrm{Gen}}

\newcommand{\Cogen}{\mathrm{Cogen}}

\newcommand{\Copres}{\mathrm{Copres}}

\newcommand{\Add}{\mathrm{ Add}}

\newcommand{\Prod}{\mathrm{ Prod}}

\newcommand{\Modr}{\mathrm{ Mod}\text{-}}
\newcommand{\Mod}{\text{-}\mathrm{ Mod}}

\begin{document}

%\doublespacing

\title{Cosilting Modules}

\author{Simion Breaz and Flaviu Pop}
\thanks{S. Breaz is supported by the Babe\c s-Bolyai University grant GSCE-30254}

\address{Simion Breaz: "Babe\c s-Bolyai" University, Faculty of Mathematics and Computer Science, Str. Mihail Kog\u alniceanu 1, 400084, Cluj-Napoca, Romania}

\email{bodo@math.ubbcluj.ro}

\address{Flaviu Pop: "Babe\c s-Bolyai" University, Faculty of Economics and Business Administration, str. T. Mihali, nr. 58-60, 400591, Cluj-Napoca, Romania}

\email{flaviu.v@gmail.com; flaviu.pop@econ.ubbcluj.ro}

%\date{\today}

\subjclass[2010]{16E30, 18G15}

\keywords{Cosilting module, silting module, $T$-cogenerated, pure-injective module, torsion-free class.}

\begin{abstract}
We study the class of modules, called cosilting modules, which are defined as the categorical duals of silting modules. Several characterizations of
these modules and connections with silting modules are presented. We prove that Bazzoni's theorem about the pure-injectivity of cotilting modules is also valid for cosilting modules.
\end{abstract}

\maketitle
%\doublespacing

\section{Introduction}
Silting objects in triangulated categories, introduced by Keller and Vossieck in \cite{Ke_Vo:1998} for bounded derived categories, are important tools in the study of homotopy or derived categories since they are in  correspondence with other important concepts such as (co-)$t$-structures or simple-minded collections of objects (see \cite{Ko_Ya:2014}).
In \cite{Angeleri_Marks_Vitoria:2015} the authors introduced the notion of (partial) silting $R$-modules (where $R$ is a unital associative ring) in order to study, in the category of all right $R$-modules, the class of cokernels of those homomorphisms between projective modules which represent, as $2$-term complexes, silting objects in the derived category $\mathbf{D}(R)$.

In this paper we study the modules which are defined as categorical duals (in the category of all right $R$-modules) of silting modules.
Therefore  (partial) cosilting modules are kernels of homomorphisms $\zeta$ between
injective modules such that the class $B_\zeta$, consisting of all modules $X$ with the
property that $\Hom_R(X,\zeta)$ is an epimorphism, has some closure properties which are
similar to those properties satisfied by the Ext-orthogonal classes associated to
(partial) cotilting modules. In fact, the class of all (partial) cosilting modules
contains all (partial) cotilting modules.
%Moreover, as in the cotilting case every partial cosilting module $T$ induces a torsion pair $({^\circ }T, \Cogen(T))$; and cosilting modules can be characterized by the fact that $({^\circ}T, \CB_\zeta)$ is a torsion pair (Corollary \ref{torsion_pair}).
 The main characterization of cosilting modules presented in this paper is Theorem \ref{main-cosilting}, where cosilting modules are described as those partial cosilting modules such that $\CB_\zeta$ has a special precovering property. Using this theorem  we prove that every partial cosilting module is a direct summand of a cosilting module (Theorem \ref{pcs-ds}). We also describe some connections of (partial) cosilting modules with (partial) silting modules (Proposition \ref{dual-partial} and Corollary \ref{dual}).  Then we show in Theorem \ref{cosi-pi} that all (partial) cosilting modules are pure-injective via the techniques used by Bazzoni \cite{Bazzoni:2003} (where she proved that all cotilting modules are pure-injective).

Throughout this paper, by a ring $R$ we will understand a unital associative ring, an $R$-module is a right $R$-module and we will denote by $\Modr R$ (respectively, by $R \Mod$) the category of all right (respectively, left) $R$-modules. For an $R$-module $T$, we consider the perpendicular class ${^{\perp}T}$ defined as follows $${^{\perp}T} = \{ X \in \Modr R \mid \Ext^{1}_{R}(X,T) = 0 \}.$$
%We also use $id(T)$ to denote the injective dimension of $T$.
An $R$-module is said to be $T$-generated if it is an epimorphic image of a direct sum of copies of $T$ and an $R$-module is called $T$-cogenerated if it can be embedded into a direct product of copies of $T$. By $\Gen(T)$ (respectively, by $\Cogen(T)$) we will denote the class of all $T$-generated (respectively, $T$-cogenerated) $R$-modules. We also denote by $\Add(T)$ (respectively, by $\Prod(T)$) the class of all modules which are isomorphic to direct summands of direct sums (respectively, of direct products) of copies of $T$.  We refer to \cite{Gobel_Trlifaj:2006} for other notions and notations used in this paper.

\section{The class $\CB_\zeta$}

In \cite{Angeleri_Marks_Vitoria:2015}, the authors introduce the notion of {\sl silting module} by defining the class $$\CALD_{\sigma} = \{ X\in\Modr R \mid \Hom_{R}(\sigma,X) \text{ is an epimorphism}\}$$where $\sigma:P_{1} \to P_{0}$ is an $R$-homomorphism of (projective) $R$-modules. In order to define the dual notion, of {\sl cosilting module}, we define the class $\CB_{\zeta}$ as follows. If $\zeta: Q_{0} \to Q_{1}$ is an $R$-homomorphism, then the class $\CB_{\zeta}$ is defined as
$$\CB_{\zeta}=\{X \in \Modr R \mid \Hom_{R}(X,\zeta) \text{ is an epimorphism} \}.$$

We mention that the results presented in this section, also work in the dual case.

%\begin{remark}
Dualizing the notion of defect functor considered in \cite{Br_Ze:2014} and \cite{Kr:2003}, for an $R$-homomorphism $\zeta:Q_0\to Q_1$ we can consider the codefect functor which assigns to every module $X$ the abelian group $\mathrm{Coker}\Hom_R(X,\zeta)$. In this context the class $\CB_\zeta$
is in fact the kernel of this functor.
%\end{remark}

The proof of the following lemma is a simple exercise.

\begin{lemma}\label{sigma_decomposition} Let $\zeta: Q_{0} \to Q_{1}$ be an $R$-homomorphism and assume that $\zeta = \tau_{\zeta} \circ \pi_{\zeta}$ is the canonical decomposition of $\zeta$. The following statements are equivalent for an $R$-module $X$:
\begin{enumerate}[{\rm (1)}]
%\begin{enumerate}
\item $X\in \CB_{\zeta}$;

\item $\Hom_{R}(X,\tau_{\zeta})$ is an isomorphism and $\Hom_{R}(X,\pi_{\zeta})$ is an epimorphism.
\end{enumerate}
\end{lemma}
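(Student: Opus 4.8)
The plan is to work with the canonical factorization $\zeta = \tau_\zeta \circ \pi_\zeta$, where $\pi_\zeta : Q_0 \to \operatorname{Im}\zeta$ is the corestriction (an epimorphism) and $\tau_\zeta : \operatorname{Im}\zeta \to Q_1$ is the inclusion (a monomorphism). Applying the (left exact, contravariant) functor $\Hom_R(X,-)$ to this factorization yields $\Hom_R(X,\zeta) = \Hom_R(X,\tau_\zeta) \circ \Hom_R(X,\pi_\zeta)$, so a composite map of abelian groups is being analyzed. The key structural fact I would use is that, since $\tau_\zeta$ is a monomorphism, $\Hom_R(X,\tau_\zeta)$ is always a monomorphism (left exactness of $\Hom$); it need not be an epimorphism. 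This gives a short exact sequence $0 \to \Hom_R(X,\operatorname{Im}\zeta) \xrightarrow{\Hom_R(X,\tau_\zeta)} \Hom_R(X,Q_1) \to \Hom_R(X,Q_1)/\operatorname{Im}\Hom_R(X,\tau_\zeta) \to 0$, and I would note that $\operatorname{Coker}\Hom_R(X,\tau_\zeta)$ embeds naturally into $\Ext^1_R(X,\operatorname{Im}\zeta)$ via the long exact sequence, though for this lemma only the elementary image/cokernel bookkeeping is needed.

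For the implication (2) $\Rightarrow$ (1): if $\Hom_R(X,\tau_\zeta)$ is an isomorphism and $\Hom_R(X,\pi_\zeta)$ is an epimorphism, then $\Hom_R(X,\zeta)$, being the composite of an epimorphism followed by an isomorphism, is an epimorphism, so $X \in \CB_\zeta$. This direction is immediate.

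For the implication (1) $\Rightarrow$ (2): suppose $\Hom_R(X,\zeta) = \Hom_R(X,\tau_\zeta) \circ \Hom_R(X,\pi_\zeta)$ is an epimorphism. Then the first map in the composite that is applied, $\Hom_R(X,\tau_\zeta)$, must be an epimorphism as well (if a composite $g \circ f$ is surjective, then $g$ is surjective). Combined with the fact that $\Hom_R(X,\tau_\zeta)$ is automatically a monomorphism because $\tau_\zeta$ is a monomorphism, we conclude $\Hom_R(X,\tau_\zeta)$ is an isomorphism. It then follows that $\Hom_R(X,\pi_\zeta) = \Hom_R(X,\tau_\zeta)^{-1} \circ \Hom_R(X,\zeta)$ is a composite of two epimorphisms, hence an epimorphism. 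This establishes (2).

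The argument is entirely formal, relying only on left exactness of $\Hom_R(X,-)$ and the trivial fact that in a composite of maps, surjectivity of the composite forces surjectivity of the last-applied map. There is no real obstacle; the only point requiring a moment's care is keeping straight the variance — because $\Hom_R(X,-)$ is covariant in the second argument, the order of composition is preserved, so $\Hom_R(X,\pi_\zeta)$ is applied first and $\Hom_R(X,\tau_\zeta)$ second, which is why it is $\Hom_R(X,\tau_\zeta)$ (and not $\Hom_R(X,\pi_\zeta)$) whose surjectivity is forced. This is presumably why the authors call the proof "a simple exercise."
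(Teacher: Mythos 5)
Your proof is correct, and it is exactly the ``simple exercise'' the authors had in mind (the paper gives no proof): the composite $\Hom_R(X,\zeta)=\Hom_R(X,\tau_\zeta)\circ\Hom_R(X,\pi_\zeta)$ is epic iff the last-applied factor $\Hom_R(X,\tau_\zeta)$ is epic --- hence iso, since it is automatically monic by left exactness --- and the first factor is then epic as well. The only blemish is the phrase ``the first map in the composite that is applied'' where you mean the last-applied (leftmost-written) map, but your parenthetical and closing remark make the intended, correct statement clear.
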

%\begin{proof} We recall that $\pi_\zeta:Q_0\to\Ima(\zeta)$ is an epimorphism, and $\tau_\zeta:\Ima(\zeta)\to Q_1$ is a monomorphism.

%(1)$\Rightarrow$(2) If $X$ lies in $\CB_{\zeta}$, then $\Hom_{R}(X,\zeta) = \Hom_{R}(X,\tau_{\zeta}) \circ \Hom_{R}(X,\pi_{\zeta})$ is an epimorphism, hence $\Hom_{R}(X,\tau_{\zeta})$ is an epimorphism. But $\Hom_{R}(X,\tau_{\zeta})$ is also a monomorphism and thus $\Hom_{R}(X,\tau_{\zeta})$ is an isomorphism. It follows that the homomorphism $\Hom_{R}(X, \pi_{\zeta}) = \Hom_{R}(X,\tau_{\zeta})^{-1} \circ \Hom_{R}(X,\zeta)$ is an epimorphism.

%(2)$\Rightarrow$(1) If $\Hom_{R}(X,\tau_{\zeta})$ is an isomorphism and $\Hom_{R}(X,\pi_{\zeta})$ is an epimorphism, then $\Hom_{R}(X,\zeta) = \Hom_{R}(X,\tau_{\zeta}) \circ \Hom_{R}(X,\pi_{\zeta})$ is an epimorphism, i.e. $X\in\CB_{\zeta}$.
%\end{proof}

\begin{corollary}\label{remark_isomorphism} Let $\zeta: Q_{0} \to Q_{1}$ be an $R$-homomorphism with $T=\Ker(\zeta)$ and let $\zeta = \tau_{\zeta} \circ \pi_{\zeta}$ be the canonical decomposition. The following statements are equivalent for an $R$-module $X$ which belongs to ${^{\perp}T}$:
\begin{enumerate}[{\rm (1)}]

\item $X\in\CB_{\zeta}$;

\item $\Hom_{R}(X,\tau_{\zeta})$ is an isomorphism.
\end{enumerate}
\end{corollary}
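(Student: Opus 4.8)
The statement to prove is Corollary~\ref{remark_isomorphism}: for $\zeta\colon Q_0\to Q_1$ with $T=\Ker(\zeta)$, canonical decomposition $\zeta=\tau_\zeta\circ\pi_\zeta$, and $X\in{}^{\perp}T$, the conditions $X\in\CB_\zeta$ and "$\Hom_R(X,\tau_\zeta)$ is an isomorphism" are equivalent. The plan is to deduce this directly from Lemma~\ref{sigma_decomposition} by showing that, under the hypothesis $X\in{}^{\perp}T$, the two clauses in part~(2) of that lemma collapse into one: specifically, that $\Hom_R(X,\pi_\zeta)$ is automatically an epimorphism. So the real content is a single implication, and everything else is formal.

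**The key step.** Let me spell out the notation. The canonical decomposition means $\pi_\zeta\colon Q_0\to \Ima(\zeta)$ is the corestriction of $\zeta$ to its image (an epimorphism) and $\tau_\zeta\colon \Ima(\zeta)\to Q_1$ is the inclusion (a monomorphism). Since $\Ker(\pi_\zeta)=\Ker(\zeta)=T$, we have a short exact sequence
\[
0 \longrightarrow T \longrightarrow Q_0 \xrightarrow{\ \pi_\zeta\ } \Ima(\zeta) \longrightarrow 0.
\]
Applying $\Hom_R(X,-)$ gives a long exact sequence whose relevant segment is
\[
\Hom_R(X,Q_0)\xrightarrow{\Hom_R(X,\pi_\zeta)} \Hom_R(X,\Ima(\zeta)) \longrightarrow \Ext^1_R(X,T).
\]
Now $X\in{}^{\perp}T$ means exactly $\Ext^1_R(X,T)=0$, so $\Hom_R(X,\pi_\zeta)$ is surjective for every such $X$. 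This is the only place the hypothesis $X\in{}^{\perp}T$ is used, and it is straightforward; there is no genuine obstacle here — the one thing to be careful about is simply invoking the correct piece of the long exact sequence and noting that $\Ext^1$ in the appropriate slot is precisely what $X\in{}^{\perp}T$ kills.

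**Assembling both directions.** For $(1)\Rightarrow(2)$: if $X\in\CB_\zeta$, Lemma~\ref{sigma_decomposition} gives that $\Hom_R(X,\tau_\zeta)$ is an isomorphism, which is exactly~(2); here the perpendicularity hypothesis is not even needed. For $(2)\Rightarrow(1)$: assume $\Hom_R(X,\tau_\zeta)$ is an isomorphism. By the key step above, since $X\in{}^{\perp}T$, the map $\Hom_R(X,\pi_\zeta)$ is an epimorphism. Hence both conditions of Lemma~\ref{sigma_decomposition}(2) hold, so $X\in\CB_\zeta$ by that lemma. This closes the equivalence. I expect the whole argument to be short — the "main obstacle," such as it is, is just recognizing that the role of the hypothesis $X\in{}^{\perp}T$ is to force $\Hom_R(X,\pi_\zeta)$ to be onto via the $\Ext^1$-vanishing, after which Lemma~\ref{sigma_decomposition} does all the remaining work.
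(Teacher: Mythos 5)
Your proof is correct and follows exactly the route the paper intends: the corollary is presented as an immediate consequence of Lemma~\ref{sigma_decomposition}, with the hypothesis $X\in{}^{\perp}T$ serving only to make $\Hom_R(X,\pi_\zeta)$ surjective via the long exact sequence for $0\to T\to Q_0\to\Ima(\zeta)\to 0$ (the same computation the authors carry out in the proof of Lemma~\ref{closure_prop}(4)). Nothing is missing.
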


%\begin{proof}
%If $X \in {^{\perp}T}$ then $\Hom_{R}(X,\pi_{\zeta})$ is an epimorphism. Now we can apply the previous lemma.
%\end{proof}

In the following results we establish some closure properties of the class $\CB_{\zeta}$.

%\begin{proposition}\label{last_term} Let $\zeta: Q_{0} \to Q_{1}$ be an $R$-homomorphism with $Q_{1}$ injective. Assume that $T=\Ker(\zeta)$ and that $$0 \to A \overset{f}\longrightarrow B \overset{g}\longrightarrow X \to 0$$ is an exact sequence such that $A$ and $B$ belong to the class $\CB_{\zeta}$. If $X \in {^{\perp}T}$ then $X\in\CB_{\zeta}$.
%\end{proposition}
%\begin{proof}
%\end{proof}

\begin{lemma}\label{closure_prop} \label{last_term} Let $\zeta: Q_{0} \to Q_{1}$ be an $R$-homomorphism.
   \begin{enumerate}[{\rm (1)}]

		\item The class $\CB_{\zeta}$ is closed under direct sums.

					\item If $Q_{1}$ is injective then the class $\CB_{\zeta}$ is closed under submodules.

					\item If $Q_{0}$ is injective then the class $\CB_{\zeta}$ is closed under extensions.

						\item If  $Q_{0}$ is injective and $T=\Ker(\zeta)$ then $\CB_{\zeta} \subseteq {^{\perp}T}$.

\item	Assume that	$Q_{1}$ is injective. If  $T=\Ker(\zeta)$ and  $$0 \to A \overset{f}\longrightarrow B \overset{g}\longrightarrow X \to 0$$ is an exact sequence such that $A$ and $B$ belong to the class $\CB_{\zeta}$ and $X \in {^{\perp}T}$ then $X\in\CB_{\zeta}$.
   \end{enumerate}
\end{lemma}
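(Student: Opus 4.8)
\emph{Sketch of a proof.} Every clause concerns the single condition that $\Hom_R(X,\zeta)$ be surjective, i.e.\ that every map $X\to Q_1$ lift along $\zeta$ to a map $X\to Q_0$, so the plan is to argue each part by a direct chase with homomorphisms. Clause (1) is immediate from the natural isomorphism $\Hom_R(\bigoplus_i X_i,\zeta)\cong\prod_i\Hom_R(X_i,\zeta)$ and the fact that a product of surjective maps of abelian groups is surjective. For (2), given a submodule $Y\subseteq X$ with $X\in\CB_\zeta$, $Q_1$ injective, and $u\colon Y\to Q_1$, I would extend $u$ to $\bar u\colon X\to Q_1$ using injectivity of $Q_1$, lift $\bar u$ to $v\colon X\to Q_0$ with $\zeta v=\bar u$ using $X\in\CB_\zeta$, and then restrict, so that $\zeta(v|_Y)=\bar u|_Y=u$. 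For (4), write $\zeta=\tau_\zeta\pi_\zeta$ for the canonical decomposition, so that $0\to T\to Q_0\xrightarrow{\pi_\zeta}\Ima(\zeta)\to 0$ is exact; applying $\Hom_R(X,-)$ and using $\Ext^1_R(X,Q_0)=0$ (here $Q_0$ is injective) identifies $\Ext^1_R(X,T)$ with $\Coker\Hom_R(X,\pi_\zeta)$, which vanishes for $X\in\CB_\zeta$ by Lemma \ref{sigma_decomposition}; hence $\CB_\zeta\subseteq{}^{\perp}T$.

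Clause (3) is the model for the correction argument used in the last part. Let $0\to A\xrightarrow{f}B\xrightarrow{g}C\to 0$ be exact with $A,C\in\CB_\zeta$, $Q_0$ injective, and let $h\colon B\to Q_1$. I would lift $hf\colon A\to Q_1$ to $\alpha\colon A\to Q_0$ using $A\in\CB_\zeta$, extend $\alpha$ to $\bar\alpha\colon B\to Q_0$ by injectivity of $Q_0$, and note that $(h-\zeta\bar\alpha)f=hf-\zeta\alpha=0$, so $h-\zeta\bar\alpha$ factors through $g$ as $\varphi g$ for some $\varphi\colon C\to Q_1$; lifting $\varphi$ to $\gamma\colon C\to Q_0$ using $C\in\CB_\zeta$ gives $\zeta(\bar\alpha+\gamma g)=\zeta\bar\alpha+\varphi g=h$, so $h$ lifts and $B\in\CB_\zeta$.

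The heart of the lemma is (5), and the correction step there is the part I expect to be the real obstacle. With $0\to A\xrightarrow{f}B\xrightarrow{g}X\to 0$ exact, $A,B\in\CB_\zeta$, $X\in{}^{\perp}T$, $T=\Ker(\zeta)$, $Q_1$ injective, fix $h\colon X\to Q_1$. Using $B\in\CB_\zeta$, lift $hg\colon B\to Q_1$ to $\beta\colon B\to Q_0$ with $\zeta\beta=hg$. Since $\zeta\beta f=hgf=0$, the map $\beta f\colon A\to Q_0$ factors through the inclusion $\iota\colon T\hookrightarrow Q_0$, say $\beta f=\iota\psi$ with $\psi\colon A\to T$. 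Here the hypothesis $X\in{}^{\perp}T$ is used: applying $\Hom_R(-,T)$ to the exact sequence and using $\Ext^1_R(X,T)=0$ shows $\Hom_R(f,T)$ is surjective, so $\psi=\tilde\psi f$ for some $\tilde\psi\colon B\to T$. Now put $\beta':=\beta-\iota\tilde\psi$; since $\zeta\iota=0$ one still has $\zeta\beta'=hg$, while $\beta'f=\iota\psi-\iota\tilde\psi f=0$, so $\beta'$ factors through $g$ as $\gamma g$ for a unique $\gamma\colon X\to Q_0$. Then $\zeta\gamma g=\zeta\beta'=hg$, and cancelling the epimorphism $g$ yields $\zeta\gamma=h$; thus $\Hom_R(X,\zeta)$ is surjective and $X\in\CB_\zeta$. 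The delicate point is precisely this correction from $\beta$ to $\beta'$: one must adjust the naive lift so that it remains a lift of $h$ along $\zeta$ \emph{and} becomes trivial on $A$, and the compatibility of these two requirements is exactly what $\Ext^1_R(X,T)=0$ guarantees (note that this particular argument in fact uses only $B\in\CB_\zeta$ and $X\in{}^{\perp}T$).
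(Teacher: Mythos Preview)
Your arguments for (1)--(4) match the paper's proofs essentially line for line.

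For (5), however, you take a genuinely different and more elementary route. The paper proceeds via the canonical decomposition $\zeta=\tau_\zeta\pi_\zeta$: it applies $\Hom_R(-,\Ima(\zeta))$ and $\Hom_R(-,Q_1)$ to the short exact sequence, uses Lemma~\ref{sigma_decomposition} to see that $\Hom_R(A,\tau_\zeta)$ and $\Hom_R(B,\tau_\zeta)$ are isomorphisms, invokes injectivity of $Q_1$ to make the second row a short exact sequence, applies the Snake Lemma to deduce that $\Hom_R(X,\tau_\zeta)$ is an isomorphism, and then quotes Corollary~\ref{remark_isomorphism}. Your argument instead performs a direct correction-of-lift chase: lift $hg$ to $\beta$, factor $\beta f$ through $T$, use $\Ext^1_R(X,T)=0$ to extend that factorization over $B$, subtract to kill $\beta f$, and descend to $X$. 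This is correct and, as you observe, uses neither the hypothesis $A\in\CB_\zeta$ nor the injectivity of $Q_1$; so you in fact establish a sharper statement than the paper's. The paper's approach has the advantage of fitting (5) into the same $\tau_\zeta/\pi_\zeta$ framework used elsewhere (Lemma~\ref{sigma_decomposition}, Corollary~\ref{remark_isomorphism}), while yours is self-contained and shows that the two ostensibly independent hypotheses on $A$ and on $Q_1$ are redundant.
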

\begin{proof}(1) This follows from the fact that every contravariant $\Hom$ functor sends direct sums into direct products.

(2) Let $Y$ be in $\CB_{\zeta}$ and let $f:X\to Y$ be a monomorphism. If $h \in \Hom_{R}(X,Q_{1})$ then, by the injectivity of $Q_{1}$, there is an homomorphism $u:Y \to Q_{1}$ such that $h = uf$. Since $\Hom_R(Y,\zeta)$ is an epimorphism, there exists $v \in \Hom_{R}(Y,Q_{0})$ with $u = \zeta v$, so we have the following commutative diagram
\[%\xymatrixcolsep{4pc}\xymatrixrowsep{3pc}
\xymatrix{
&Q_{1} &Q_{0}\ar[l]_{\zeta}\\
0\ar[r] &X\ar[r]^{f}\ar[u]^{h} &Y\ar[ul]_{u}\ar[u]_{v}\\
}
\]
such that the bottom row is exact.

Then $\Hom_{R}(X,\zeta)(vf) = h$. It follows that $\Hom_{R}(X,\zeta)$ is an epimorphism, hence $X\in\CB_{\zeta}$.
%Therefore the class $\CB_{\zeta}$ is closed under submodules.

(3) Let $0 \to X \overset{f}\longrightarrow Y \overset{g}\longrightarrow Z \to 0$ be an exact sequence with $X,Z\in\CB_{\zeta}$.
If $h\in\Hom_{R}(Y,Q_{1})$ there is $k\in\Hom_{R}(X,Q_{0})$ such that $\zeta k = hf$ (since $\Hom_{R}(X,\zeta)$ is epic).
Since $Q_{0}$ is injective, there is $u:Y \to Q_{0}$ with $k=uf$. Because $(\zeta u -h)f = 0$, there exists $v:Z \to Q_{1}$ such that $\zeta u-h = vg$. But $Z\in\CB_{\zeta}$, hence there is $r \in \Hom_{R}(Z,Q_{0})$ with $\zeta r = v$.
All these homomorphisms are represented in the following diagram:
\[%\xymatrixcolsep{4pc}\xymatrixrowsep{4pc}
\xymatrix{
0\ar[r] &X\ar[r]^{f}\ar[dr]^{k} &Y\ar[r]^{g}\ar[d]_{u}\ar[dr]^{\ \ r} &Z\ar[r]\ar@{-->}[d]^{v}\ar@{-->}[dl]_{\ \ \ \ \ h} &0\\
& &Q_{0}\ar[r]_{\zeta} &Q_{1}.\\
}
\]
Therefore $\Hom_{R}(Y,\zeta)(u-rg)=h$, and we obtain that $\Hom_{R}(Y,\zeta)$ is an epimorphism since $h$ was arbitrarily. Hence
$Y\in\CB_{\zeta}$, and the proof is complete.

(4) Let $X \in \CB_{\zeta}$. Applying the covariant $\Hom_{R}(X,-)$ functor to the short exact sequence
$$0 \to T \overset{i}\longrightarrow Q_{0} \overset{\pi_{\zeta}}\longrightarrow \Ima(\zeta) \to 0,$$ where $\zeta = \tau_{\zeta} \circ \pi_{\zeta}$ is the canonical decomposition, we obtain the exact sequence
$$0 \to \Hom_{R}(X,T) \longrightarrow \Hom_{R}(X,Q_{0}) \longrightarrow \Hom_{R}(X,\Ima(\zeta)) \longrightarrow \Ext^{1}_{R}(X,T) \to 0.$$ By Lemma \ref{sigma_decomposition}, we have that the homomorphism $\Hom_{R}(X,\pi_{\zeta})$ is an epimorphism, hence $\Ext^{1}_{R}(X,T) = 0$. It follows that $X \in {^{\perp}T}$.

(5) Suppose that $\zeta = \tau_{\zeta} \circ \pi_{\zeta}$ is the canonical decomposition. Applying the contravariant functor
$\Hom_{R}(-,\Ima(\zeta))$, we have the exact sequence
$$0 \to \Hom_{R}(X,\Ima(\zeta)) \longrightarrow \Hom_{R}(B,\Ima(\zeta)) \longrightarrow \Hom_{R}(A,\Ima(\zeta)).$$
We also apply the contravariant functor $\Hom_{R}(-,Q_{1})$, and %because of $Q_{1}$ is injective,
we obtain the exact sequence
$$0 \to \Hom_{R}(X,Q_{1}) \longrightarrow \Hom_{R}(B,Q_{1}) \longrightarrow \Hom_{R}(A,Q_{1}) \to 0.$$
Since $A$ and $B$ lie in $\CB_{\zeta}$, it follows by Lemma \ref{sigma_decomposition}, that both $\Hom_{R}(A,\tau_{\zeta})$ and $\Hom_{R}(B,\tau_{\zeta})$ are isomorphisms. It follows that $\Hom_{R}(f,\Ima({\zeta}))$ is an epimorphism, hence we have the following commutative diagram with exact rows
\[\xymatrix{0 \ar[r] & \Hom_{R}(X,\Ima(\zeta)) \ar[r] \ar[d] &
\Hom_{R}(B,\Ima(\zeta)) \ar[r] \ar[d] & \Hom_{R}(A,\Ima(\zeta)) \ar[r] \ar[d] & 0\\
0 \ar[r] & \Hom_{R}(X,Q_{1}) \ar[r] & \Hom_{R}(B,Q_{1}) \ar[r] & \Hom_{R}(A,Q_{1}) \ar[r] & 0
}\]

%$$\begin{CD}
%0 @>>> \Hom_{R}(X,\Ima(\zeta)) @ >g_{\Ima\zeta}^{0}>> \Hom_{R}(B,\Ima(\zeta)) @ >f_{\Ima\zeta}^{0}>> \Hom_{R}(A,\Ima(\zeta)) @>>> 0\\
%@. @V(\tau_{\zeta})^{X}_{0}VV @V(\tau_{\zeta})^{B}_{0}VV @VV(\tau_{\zeta})^{A}_{0}V\\
%0 @>>> \Hom_{R}(X,Q_{1}) @ >g_{Q_{1}}^{0}>> \Hom_{R}(B,Q_{1}) @ >f_{Q_{1}}^{0}>> \Hom_{R}(A,Q_{1}) @>>> 0\\
%\end{CD}$$

Applying the Snake Lemma we get that $\Hom_R(X,\tau_\zeta)$ is an isomorphism. The conclusion follows from Corollary \ref{remark_isomorphism}.
\end{proof}

\section{Cosilting Modules}

%\end{remark}

\begin{definition}
We say that an $R$-module $T$ is:
   \begin{enumerate}[{(1)}]

		\item {\sl partial cosilting} ({\sl with respect to $\zeta$}), if there exists an injective copresentation of $T$ $$0 \to T \overset{f}\longrightarrow Q_{0} \overset{\zeta}\longrightarrow Q_{1}$$such that:
        \begin{enumerate}[(a)]
           \item $T\in\CB_{\zeta}$, and
           \item the class $\CB_{\zeta}$ is closed under direct products;
        \end{enumerate}

			\item {\sl cosilting} ({\sl with respect to $\zeta$}), if there exists an injective copresentation $$0 \to T \overset{f}\longrightarrow Q_{0} \overset{\zeta}\longrightarrow Q_{1}$$ of $T$ such that $\Cogen(T)=\CB_{\zeta}$.
   \end{enumerate}
\end{definition}

\begin{remark}
If $\zeta:Q_0\to Q_1$ is a homomorphism such that $Q_{0}$ and $Q_{1}$ are injective $R$-modules
then the class $\CB_{\zeta}$ is closed under submodules and extensions (Lemma \ref{closure_prop}).
Hence the condition (b) in the definition of partial cosilting module is equivalent to the fact that the class $\CB_\zeta$ is a torsion-free class. Moreover, since $\Cogen(T)$ is closed under direct products, it is easy to see that every cosilting module is partial cosilting.
\end{remark}

%The following simple example shows that the class of cosilting modules is larger than the class of cotilting modules, and that
%the cosilting property depends on the choice of $\zeta$.

%\begin{example}
%In the category of all $\Z$-modules we consider the module $\Z(p^\infty)$, where $p$ is a fixed prime. If
%$\zeta:\Z(p^\infty)\to \bigoplus_{q\neq p}\Z(p^\infty)$ is the zero homomorphism then
%$$\B_\zeta=\{X\in\mathrm{Mod}\text{-}\Z\mid \Hom(X,\bigoplus_{q\neq p}\Z(p^\infty))=0$$ is the class of all torsion abelian
%$p$-groups
%\end{example}

In the following example we will see that every (partial) cotilting module is (partial) cosilting and, for every ring $R$, the trivial module $0$ is cosilting. Moreover, even for some hereditary
rings there exists non-zero cosilting modules which are not cotilting.

\begin{example}\label{si.vs.ti} (a) Let us recall, from \cite{Colpi_Tonolo_Trlifaj:1997}, that a right $R$-module $T$ is \textsl{partial cotilting} if and only if $\Cogen(T) \subseteq {^{\perp}T}$ and the class ${^{\perp}T}$ is a torsion-free class. Moreover, $T$ is \textsl{cotilting} if  and only if $\Cogen(T) = {^{\perp}T}$.
We note that the class ${^{\perp}T}$ is closed under submodules if and only if $\mathrm{id}(T) \leq 1$ ($\mathrm{id}(T)$ denotes the injective
dimension of $T$), so every partial cotilting module is of injective dimension at most 1.

Let $T$ be an $R$-module of injective dimension at most 1, and consider an exact sequence $$0 \to T \overset{f}\longrightarrow Q_{0} \overset{\zeta}\longrightarrow Q_{1} \to 0$$ with $Q_0$ and $Q_1$ injective $R$-modules. \textit{Then $T$ is (partial) cotilting if and only if $T$ is (partial) cosilting with respect to $\zeta$.}

The direct implications are obvious since $\CB_\zeta={^{\perp}T}$.
If we assume that $T$ is partial cosilting with respect to $\zeta$, we have
$\Cogen(T) \subseteq \CB_\zeta={^{\perp}T}$. Since $\CB_\zeta$ is closed under direct products it follows that $T$ is partial cotilting module. Moreover, if $T$ is cosilting with respect to $\zeta$, we obtain $\Cogen(T) = \CB_{\zeta} = {^{\perp}T}$, hence $T$ is cotilting.

(b) For every ring $R$ we consider an injective cogenerator $E$ for $\Modr R$. If $\zeta:0\to E$ is the trivial homomorphism then $\CB_\zeta$ contains only the trivial module, hence $0=\Ker(\zeta)$ is a cosilting module.

(c) Let $\kappa$ be a field, and let $R$ be the ring of all upper $2\times 2$ triangular matrices over $\kappa$. In the category
$\Modr R$ there are two simple modules $S=(0,\kappa)$ and $T=Q/S$, where $Q=(\kappa,\kappa)$. If $\zeta:Q\to T\oplus T$ is a homomorphism
such that $\Ker(\zeta)=S$, then  $X\in\CB_\zeta$ if and only if $\Ext^1_R(X,S)=0$ and $\Hom_R(X,T)=0$.

{Let $X\in \CB_\zeta$. Since $Q\oplus T$ is an injective cogenerator for $\Modr R$, there exists a set $I$ such that
$X$ can be embedded in $Q^I\oplus T^I$. Moreover, we have $\Hom_R(X,T)=0$, and it follows that $X$ can be embedded in $Q^I$. Therefore, we can suppose that $X$ is a submodule of the direct product $Q^I$. Since the direct products are exact in module categories, we can view $S^I$ as a submodule of $Q^I$ such that $Q^I/S^I\cong T^I$. Therefore, $\frac{X}{X\cap S^I}\cong \frac{X+S^I}{S^I}$ can be embedded in $T^I$. Using again $\Hom_R(X,T)=0$, it follows that $X\subseteq S^I$, hence $X\in\Cogen(S)$. Then $\CB_\zeta\subseteq \Cogen(S)$.

Conversely, if $X\in \Cogen(S)$, it is easy to see that there exists a set $I$ such that $X$ is isomorphic to  $S^{(I)}$. It follows that $\Ext^1_R(X,S)=0$ and $\Hom_R(X,T)=0$, hence $X\in\CB_\zeta$

Therefore
$\CB_\zeta=\Cogen(S)$, hence $S$ is cosilting. On the other hand, $S$ is not a cotilting module, since $Q\in {^{\perp}S}\setminus \Cogen(S)$.
}
\end{example}

%\begin{remark}
%In \cite{Bazzoni:2003}, an $R$-module $T$ is said to be partial cotilting if and only if $\Cogen(T) \subseteq {^{\perp}T}$ and $\mathrm{id}(T) \leq 1$. We note that this definition is weaker than the one used by Colpi, Tonolo and Trlifaj in \cite{Colpi_Tonolo_Trlifaj:1997}.
%\end{remark}

Applying Lemma \ref{closure_prop} we obtain:

\begin{lemma}\label{inclusions} If the $R$-module $T$ is partial cosilting with respect to the injective copresentation $\zeta : Q_{0} \to Q_{1}$, then $\Cogen(T) \subseteq \CB_{\zeta} \subseteq {^{\perp}T}.$
\end{lemma}

%\begin{corollary}\label{cogen-exten}
%If $T$ is partial cosilting then $\Cogen(T)$ is a torsion-free class.
%\end{corollary}
%\begin{proof}
%\end{proof}

%Since $\Cogen(T)\subseteq {^{\perp}T}$,
Some of the properties of (partial) cotilting modules are still valid for (partial) cosilting modules. We recall that the class $\Copres(T)$ is the class
of all modules $X$ which are kernels of homomorphisms between direct products of copies of $T$. In the following we will denote by
${^{\circ}T} = \{ X \in \Modr R \mid \Hom_{R}(X,T)=0\},$
the torsion class induced by $T$.

\begin{corollary}\label{torsion_pair}\label{cogen=copres} Let $\zeta:Q_0\to Q_1$ be a homomorphism between injective modules. If $T=\Ker(\zeta)$, then the following are true:
\begin{enumerate}[{\rm (1)}]
\item If $T$ is partial cosilting then the pair $({^{\circ}T},\Cogen(T))$ is a torsion pair.

\item $T$ is cosilting with respect to $\zeta$ if and only if  $({^{\circ}T},\CB_\zeta)$ is a torsion pair.

\item If $T$ is a cosilting $R$-module then $\Cogen(T) = \Copres(T)$.
\end{enumerate}
\end{corollary}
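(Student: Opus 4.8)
The plan is to prove the three parts in order, since (2) follows readily from (1) and (3) relies on the structure furnished by both.

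\emph{Part (1).} I would prove that $\Cogen(T)$ is closed under extensions. Since $\Cogen(T)$ is automatically closed under submodules and direct products, it is then a torsion-free class; its associated torsion class is $\{X\mid\Hom_{R}(X,F)=0\text{ for all }F\in\Cogen(T)\}$, which coincides with ${^{\circ}T}$ because $T\in\Cogen(T)$ and every member of $\Cogen(T)$ embeds into a power of $T$. To prove closure under extensions, take an exact sequence $0\to A\to B\to C\to 0$ with $A,C\in\Cogen(T)$. By Lemma \ref{inclusions}, $A,C\in{^{\perp}T}$, and since ${^{\perp}T}$ is closed under extensions (apply $\Ext^{1}_{R}(-,T)$ to the sequence) we get $B\in{^{\perp}T}$; hence $\Ext^{1}_{R}(C,T)=0$ and $\Hom_{R}(-,T)$ turns the sequence into a short exact sequence, so every $f\colon A\to T$ extends along $A\hookrightarrow B$. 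Therefore $\mathrm{Rej}_{T}(B)\cap A\subseteq\mathrm{Rej}_{T}(A)=0$, and the image of $\mathrm{Rej}_{T}(B)$ in $C$ is contained in $\mathrm{Rej}_{T}(C)=0$; thus $\mathrm{Rej}_{T}(B)=0$ and $B\in\Cogen(T)$.

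\emph{Part (2).} If $T$ is cosilting with respect to $\zeta$ then it is partial cosilting, so by (1) the pair $({^{\circ}T},\Cogen(T))=({^{\circ}T},\CB_{\zeta})$ is a torsion pair. Conversely, suppose $({^{\circ}T},\CB_{\zeta})$ is a torsion pair. Then $\CB_{\zeta}$ is a torsion-free class, hence closed under direct products, and $T\in\CB_{\zeta}$ since $\Hom_{R}({^{\circ}T},T)=0$ and $\CB_{\zeta}=\{M\mid\Hom_{R}({^{\circ}T},M)=0\}$; thus $T$ is partial cosilting with respect to $\zeta$. By (1), $({^{\circ}T},\Cogen(T))$ is also a torsion pair, and since a torsion pair is determined by its torsion class, $\Cogen(T)=\CB_{\zeta}$, i.e. $T$ is cosilting with respect to $\zeta$.

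\emph{Part (3).} The inclusion $\Copres(T)\subseteq\Cogen(T)$ is clear. For the converse, let $X\in\Cogen(T)$ with $X\neq 0$, set $I=\Hom_{R}(X,T)$, and let $\phi\colon X\to T^{I}$ be the evaluation map $x\mapsto(f(x))_{f\in I}$, which is a monomorphism because $X\in\Cogen(T)$. Put $C=\Coker(\phi)$ and apply $\Hom_{R}(-,T)$ to $0\to X\xrightarrow{\phi}T^{I}\to C\to 0$. The coordinate projections $p_{f}\colon T^{I}\to T$ satisfy $p_{f}\circ\phi=f$, so $\Hom_{R}(T^{I},T)\to\Hom_{R}(X,T)$ is surjective; hence the connecting homomorphism $\Hom_{R}(X,T)\to\Ext^{1}_{R}(C,T)$ vanishes and $\Ext^{1}_{R}(C,T)$ embeds into $\Ext^{1}_{R}(T^{I},T)$. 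Since $T$ is cosilting, it is partial cosilting, so $T^{I}\in\Prod(T)\subseteq\Cogen(T)\subseteq{^{\perp}T}$ by Lemma \ref{inclusions}; thus $\Ext^{1}_{R}(T^{I},T)=0$ and $C\in{^{\perp}T}$. As $X$ and $T^{I}$ lie in $\Cogen(T)=\CB_{\zeta}$, Lemma \ref{closure_prop}(5) gives $C\in\CB_{\zeta}=\Cogen(T)$. Fixing a monomorphism $C\hookrightarrow T^{J}$ and composing with $T^{I}\to C$ produces an exact sequence $0\to X\to T^{I}\to T^{J}$, so $X\in\Copres(T)$.

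The conceptual key — and where I expect the decisive insight to lie — is the observation in (1) that all three terms of the extension belong to ${^{\perp}T}$: after that the reject computation is routine, and (2) is then pure torsion-theoretic bookkeeping. The main technical obstacle sits in (3): because $\CB_{\zeta}$ need not be closed under quotients, the cokernel of an arbitrary embedding $X\hookrightarrow T^{I}$ need not lie in $\Cogen(T)$, so one must use the canonical evaluation embedding, which is exactly what forces $\Hom_{R}(T^{I},T)\to\Hom_{R}(X,T)$ to be surjective and hence, together with $\Ext^{1}_{R}(T^{I},T)=0$, puts the cokernel into ${^{\perp}T}$ where Lemma \ref{closure_prop}(5) can be applied.
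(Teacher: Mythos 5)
Your proof is correct and follows essentially the same route as the paper: closure of $\Cogen(T)$ under extensions via $\Ext^1_R(C,T)=0$, the torsion-pair bookkeeping for (2), and for (3) an embedding $X\hookrightarrow T^I$ with $\Hom_R(-,T)$ surjective so that the cokernel lands in ${^{\perp}T}$ and Lemma \ref{closure_prop}(5) applies. The only differences are cosmetic: you replace the paper's Snake-Lemma argument with the double-dual maps $\delta_X$ by a direct computation with $\mathrm{Rej}_T$, and you construct the evaluation embedding explicitly where the paper cites Colby--Fuller.
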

\begin{proof} (1) This follows from the inclusion $\Cogen(T)\subseteq {^{\perp}T}$ and \cite[Proposition 1.4.2]{Colby_Fuller:2004} by a standard proof. For reader's convenience, we present the details of this proof.

Suppose that $T$ is partial cosilting $R$-module with respect to the injective copresentation $\zeta : Q_{0} \to Q_{1}$. We will prove that the class $\Cogen(T)$ is a torsion-free class. Since $\Cogen(T)$ is closed under both submodules and direct products, it is enough to prove that $\Cogen(T)$ is closed under extensions. If $S$ is the endomorphism ring of $T$, let us denote by $\Delta$ both contravariant functors $\Hom_{R}(-,T)$ and $\Hom_{S}(-,T)$. We recall that a module $X$ is $T$-cogenerated if and only if the canonical homomorphism $\delta_{X}:X\to\Delta^{2}(X)$ is a monomorphism.

Let $0 \to X \to Y \to Z \to 0$ be an exact sequence such that $X,Z\in\Cogen(T)$. Since $\Cogen(T)\subseteq {^{\perp}T}$, we have that $\Ext^{1}_{R}(Z,T)=0$, hence the induced sequence $$0 \to \Delta(Z) \to \Delta(Y) \to \Delta(X) \to 0$$ is also exact. Applying Snake Lemma to the following commutative diagram with exact rows

$$\begin{CD} 0@>>> X@>>> Y@>>> Z@>>> 0\\
 @. @VV{\delta_{X}}V @VV{\delta_{Y}}V @VV{\delta_{Z}}V @.\\
0@>>> \Delta^{2}(X)@>>> \Delta^{2}(Y)@>>> \Delta^{2}(Z)@. \ ,
\end{CD}$$
we obtain that $\delta_{Y}$ is a monomorphism, hence $Y\in\Cogen(T)$. Thus the class $\Cogen(T)$ is closed under extensions.

By \cite[Proposition 1.4.2]{Colby_Fuller:2004}, we have that $(\CT,\Cogen(T))$ is a torsion pair, where $$\CT=\{ X \in \Modr R \mid \Hom_{R}(X,F) = 0, \text{ for all } F \in \Cogen(T) \}.$$ It is easy to show that $\CT = {^{\circ}T}$ and the proof is complete.

(2) The direct implication is a consequence of (1). Conversely, if $({^\circ T},\CB_\zeta)$ is a torsion pair, then $\CB_\zeta$ is closed
under direct products and $T\in \CB_\zeta$. It follows that $T$ is partial cosilting, and we apply (1) to obtain that
$({^{\circ}T},\Cogen(T))$ is a torsion pair. Then $\CB_\zeta=\Cogen(T)$, and the proof is complete.

(3)
Assume that $T$ is cosilting $R$-module. It is obvious that $\Copres(T)\subseteq\Cogen(T)$. For the reverse inclusion, let $X\in\Cogen(T)$. By \cite[Lemma 4.2.1]{Colby_Fuller:2004}, there exists a monomorphism $0 \to X/\mathrm{Rej}_{T}(X) \overset{f}\longrightarrow T^{I}$ such that $\Hom_{R}(f,T)$ is an epimorphism, where $\mathrm{Rej}_{T}(X)=\bigcap_{f\in\Hom_R(X,T)}\Ker(f)$. But $\mathrm{Rej}_{T}(X)=0$ since $X$ is a $T$-cogenerated $R$-module, hence we have an exact sequence $$0 \to X \overset{f}\longrightarrow T^{I} \overset{g}\longrightarrow Y \to 0.$$ Since $\Hom_{R}(f,T)$ is an epimorphism, the sequence $$0 \to \Ext^{1}_{R}(Y,T) \overset{\Ext^{1}_{R}(g,T)}\longrightarrow \Ext^{1}_{R}(T^{I},T)  \overset{\Ext^{1}_{R}(f,T)}\longrightarrow \Ext^{1}_{R}(X,T)$$ is exact.
But $\Ext^{1}_{R}(T^{I},T)=0$ since $\Cogen(T) \subseteq {^{\perp}T}$. It follows that $\Ext^{1}_{R}(Y,T)=0$, i.e. $Y \in {^{\perp}T}$. By Lemma \ref{closure_prop}(5), it follows that $Y\in\CB_{\zeta}$, hence $Y\in\Cogen(T)$. Therefore, $X\in\Copres(T)$.
%
%If $X$ is a $T$-cogenerated module, since $\Cogen(T)\subseteq {^{\perp}T}$, we can apply \cite[Lemma 4.2.1]{Colby_Fuller:2004} to obtain an exact sequence $0\to X\to T^I\to Y\to 0$ such that $Y\in {^{\perp}T}$. By Lemma \ref{last_term}(5) it follows that $Y\in \CB_\zeta$, hence $X\in \Copres(T)$.
\end{proof}

In order to construct partial cosilting modules, we can use the same technique as in \cite[Proposition 2.8]{Colpi_Tonolo_Trlifaj:1997} .

\begin{proposition}\label{dual-partial}
Let $S$ be a commutative ring, and let $R$ be an $S$-algebra.
If $E$ is an injective cogenerator for the category $\Modr S$, we denote by
$$(-)^d=\Hom_S(-,E):R\Mod \rightleftarrows \Modr R:\Hom_S(-,E)=(-)^d$$
the $\Hom$-contravariant functors induced by $E$.

Suppose that $$P_2\to P_1\overset{\zeta}\to P_0\to M\to 0$$ is a projective presentation of the left $R$-module $M$. The following statements are true:
\begin{enumerate}[{\rm (1)}]

\item If $X\in \CB_{\zeta^d}$ then $X^d\in\CALD_\zeta$.

\item If all projective modules $P_i$ are finitely presented and $Y\in \CALD_\zeta$ then $Y^d\in \CB_{\zeta^d}$.

\item Suppose that all projective modules $P_i$ are finitely presented. Then the left $R$-module $M$ is
partial silting with respect to $\zeta$ if and only if the dual $M^d$ is a
partial cosilting right $R$-module with respect to $\zeta^d$.
\end{enumerate}
%
%
%\begin{enumerate}[(1)]
%\item  If $M$ is partial silting with respect to $\zeta$ then the dual $M^d$ is a
%partial cosilting right $R$-module with respect to $\zeta^d$.
%
%\item Suppose that $R$ is an Artin algebra, $S$ is the center of $R$ and $(-)^d$ is the standard self-duality induced by the injective
%envelope of $S/J(S)$. Then $M$ is partial silting with respect to $\zeta$ if and only if the dual $M^d$ is a
%partial cosilting right $R$-module with respect to $\zeta^d$.
%\end{enumerate}
\end{proposition}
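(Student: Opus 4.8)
The plan is to prove the three statements in the order given, exploiting the standard adjunction/duality properties of the functor $(-)^d=\Hom_S(-,E)$ between $R\Mod$ and $\Modr R$. The key facts I will rely on throughout are: $E$ is an injective cogenerator, so $(-)^d$ is exact and faithful, and it sends an $R$-module to $0$ only if the module is $0$; moreover for a finitely presented $R$-module $P$ there is a natural isomorphism $\Hom_R(Y,P)^d\cong \Hom_R(P^d,Y^d)$ (and a natural evaluation map in general), and $P^d$ is injective whenever $P$ is projective (since $E$ is injective). I will also use the standard ``hom-dual'' natural isomorphism $\Hom_S(Y\otimes_R X,E)\cong\Hom_R(X,\Hom_S(Y,E))$, which specializes to $\Hom_R(X,Y^d)\cong \Hom_R(Y,X^d)$ when things are finitely presented, making the whole situation symmetric.

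For \textbf{(1)}, I would start with $X\in\CB_{\zeta^d}$, meaning $\Hom_R(X,\zeta^d)\colon \Hom_R(X,P_0^d)\to\Hom_R(X,P_1^d)$ is an epimorphism. Using the natural isomorphism $\Hom_R(X,P_i^d)\cong \Hom_R(P_i,X^d)=\Hom_R(P_i,X^d)$ that comes from the tensor-hom adjunction applied to the projective (hence, at least when finitely generated free, flat) modules $P_i$ — actually here no finiteness is needed because $(-)^d$ of a projective is injective and one has $\Hom_R(X, \Hom_S(P_i,E))\cong\Hom_S(P_i\otimes_R X, E)\cong\Hom_S(\Hom_R(\,?\,)\dots)$; more cleanly, $\Hom_R(X,P_i^d)\cong (P_i\otimes_R X)^d\cong\Hom_R(P_i,X^d)^{\text{op-ish}}$ — one identifies $\Hom_R(X,\zeta^d)$, up to these natural isomorphisms, with $(\zeta\otimes_R X)^d$, equivalently with $\Hom_S$ of the map $P_1\otimes_R X\to P_0\otimes_R X$ applied to $E$. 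Since $(-)^d$ is exact and $E$ is a cogenerator, $(\zeta\otimes X)^d$ is epic iff $\zeta\otimes X$ is monic. Dually, $X^d\in\CALD_\zeta$ says $\Hom_R(\zeta,X^d)$ is epic, and by the same adjunction $\Hom_R(\zeta,X^d)\cong\Hom_R(\zeta, \Hom_S(X,E))\cong\Hom_S(\zeta\otimes_R X,E)=(\zeta\otimes X)^d$. So (1) is exactly the implication ``$(\zeta\otimes X)^d$ epic $\Rightarrow$ $(\zeta\otimes X)^d$ epic'', i.e.\ a tautology once the identifications are in place; the only real content is the naturality bookkeeping, which I would spell out by writing the commutative square relating $\Hom_R(X,\zeta^d)$ and $\Hom_R(\zeta,X^d)$.

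For \textbf{(2)} and \textbf{(3)}, finiteness of the $P_i$ becomes essential: only then is $\Hom_R(Y,\zeta^d)\cong\Hom_R(\zeta,Y^d)$ valid as stated (one needs $P_i$ finitely presented for the evaluation map $Y\otimes_R P_i^d\to \Hom_S(\Hom_R(P_i,Y),E)$, or equivalently $\Hom_R(P_i,Y)^d\cong Y^d\otimes_R P_i$-type isomorphisms, to be an isomorphism rather than merely a map). So for (2) I start from $Y\in\CALD_\zeta$, i.e.\ $\Hom_R(\zeta,Y)$ epic, note $\Hom_R(\zeta,Y)^d$ is then monic, and identify it via the finiteness isomorphisms with $\Hom_R(Y^d,\zeta^d)$ (or rather with a map sitting in a short exact sequence forcing $\Hom_R(Y^d,\zeta^d)$ to be epic); since $E$ is a cogenerator, exactness is detected after applying $(-)^d$, giving $Y^d\in\CB_{\zeta^d}$. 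For (3), forward direction: if $M$ is partial silting with respect to $\zeta$ then $M\in\CALD_\zeta$, so by (2) $M^d\in\CB_{\zeta^d}$ — and one checks $0\to M^d\to P_0^d\to P_1^d$ is an injective copresentation (it is, since $(-)^d$ is exact, $P_2\to P_1\to P_0\to M\to0$ dualizes to $0\to M^d\to P_0^d\to P_1^d\to P_2^d$, and the $P_i^d$ are injective); and closure of $\CALD_\zeta$ under direct sums dualizes, via (1) and (2), to closure of $\CB_{\zeta^d}$ under direct products, because $(-)^d$ turns a direct product $\prod X_i^d$ back into $(\bigoplus X_i)^d$ and one uses the faithfulness of $(-)^d$ together with the fact that every $R$-module embeds in a double-dual when $E$ is a cogenerator. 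The converse direction of (3) is the step I expect to be the main obstacle: from $M^d$ partial cosilting we know $\CB_{\zeta^d}$ is closed under products and $M^d\in\CB_{\zeta^d}$, and we must deduce $\CALD_\zeta$ closed under direct sums and $M\in\CALD_\zeta$; the subtlety is that $(-)^d$ is only a one-way bridge — an arbitrary left $R$-module $N$ need not be of the form $X^d$ — so to test whether $N\in\CALD_\zeta$ I would use that $N$ embeds in $N^{dd}=(N^d)^d$ with $N^d\in\Modr R$ and $\CALD_\zeta$ is (as in \cite{Angeleri_Marks_Vitoria:2015}) closed under the relevant operations; more precisely I would show $M\in\CALD_\zeta$ by dualizing $M^d\in\CB_{\zeta^d}$ via (1) to get $M^{dd}\in\CALD_\zeta$ and then using that $\CALD_\zeta$ is closed under submodules (when $P_0$ is projective $\CALD_\zeta$ is closed under epimorphic images; the dual statement, submodule-closure of $\CB$, is Lemma \ref{closure_prop}(2)) — so I need the precise closure properties of $\CALD_\zeta$, which are the silting-side analogues of Lemma \ref{closure_prop} and are available from \cite{Angeleri_Marks_Vitoria:2015} or by the ``dual case also works'' remark in Section 2. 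Closure of $\CALD_\zeta$ under direct sums follows because a direct sum $\bigoplus N_j$ embeds in $\prod N_j\subseteq\prod N_j^{dd}=(\bigoplus N_j^d)^d$, each $N_j^d\in\CB_{\zeta^d}$ by (2), the direct sum $\bigoplus N_j^d$ lies in $\CB_{\zeta^d}$ by Lemma \ref{closure_prop}(1), hence its dual lies in $\CALD_\zeta$ by (1), and then submodule-closure of $\CALD_\zeta$ finishes it. I would organize the final write-up so that (1) and (2) are proved cleanly via the adjunction square, and then (3) is a short deduction citing (1), (2), Lemma \ref{closure_prop}, and the dual closure properties of $\CALD_\zeta$.
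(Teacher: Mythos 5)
Your treatment of (1) and (2) is sound and is essentially the paper's argument in adjunction language: both $\Hom_R(X,\zeta^d)$ and $\Hom_R(\zeta,X^d)$ identify naturally with $(X\otimes_R\zeta)^d$, which gives (1) with no finiteness hypothesis, and for (2) the chain $\Hom_R(\zeta,Y)$ epic $\Rightarrow$ $\Hom_R(\zeta,Y)^d$ monic $\Rightarrow$ $Y^d\otimes_R\zeta$ monic (evaluation isomorphisms, needing the $P_i$ finitely presented) $\Rightarrow$ $(Y^d\otimes_R\zeta)^d=\Hom_R(Y^d,\zeta^d)$ epic works. Your forward direction of (3) also goes through, by a route genuinely different from the paper's: you get product-closure of $\CB_{\zeta^d}$ from $\prod X_i\hookrightarrow\prod X_i^{dd}=(\bigoplus X_i^d)^d$ together with submodule-closure of $\CB_{\zeta^d}$ (Lemma \ref{closure_prop}(2), valid since $P_1^d$ is injective), whereas the paper invokes Strebel's theorem that $\Tor_1^R(-,M)$ and $-\otimes_R P_i$ commute with direct products for finitely presented arguments. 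Your version is more elementary; the paper's is more direct and does not pass through double duals.

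The converse direction of (3) contains a genuine gap: you deduce $M\in\CALD_\zeta$ from $M\hookrightarrow M^{dd}\in\CALD_\zeta$ by invoking closure of $\CALD_\zeta$ under \emph{submodules}, and you use the same closure again to get closure of $\CALD_\zeta$ under direct sums. But $\CALD_\zeta$ is closed under epimorphic images and extensions, not under submodules --- your own parenthetical only establishes epimorphic-image-closure of $\CALD$ and submodule-closure of $\CB$, which is not what you need. Concretely, $\CALD_\zeta\subseteq\{Y\mid\Ext^1_R(M,Y)=0\}$-type conditions fail for submodules already over $\Z$: $\Ext^1_\Z(\Z/p,\BBQ)=0$ while $\Ext^1_\Z(\Z/p,\Z)\neq0$. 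The repair is the one the paper uses: since $(-)^d$ is exact and faithful ($E$ is an injective cogenerator), every implication in your proof of (2) is reversible --- a map $f$ is epic if and only if $f^d$ is monic --- so for finitely presented $P_i$ one has the full equivalence $Y\in\CALD_\zeta\Leftrightarrow Y^d\in\CB_{\zeta^d}$; applying it to $Y=M$ gives $M\in\CALD_\zeta$ directly from $M^d\in\CB_{\zeta^d}$, with no need to descend from $M^{dd}$. Closure of $\CALD_\zeta$ under direct sums should likewise be obtained directly from the fact that $\Hom_R(P_1,-)$, $\Hom_R(\ima\zeta,-)$ and $\Ext^1_R(M,-)$ commute with direct sums because $M$, $P_1$ and $\ima\zeta$ are finitely presented (indeed it holds independently of any cosilting hypothesis on $M^d$), rather than via the dualization detour.
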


\begin{proof}
Let $U$ be the image of $\zeta$ and let $\zeta=\tau_{\zeta} \circ \pi_{\zeta}$ be the canonical decomposition of $\zeta$ (i.e. $\pi_{\zeta}:P_1\to U$ and $\tau_{\zeta}:U\to P_0$). By the dual result of Lemma \ref{sigma_decomposition} we obtain that a left $R$-module $Y$ is in the class $\CALD_\zeta$ if and only if $\Ext_R^1(M,Y)=0$ and $\Hom_R(\pi_{\zeta},Y)$ is an isomorphism. Moreover, the exact sequence $$0\to M^d\to P_0^d\overset{\zeta^d}\to P_1^d$$ is an injective copresentation of $M^d$ and $\zeta^d=\pi_{\zeta}^d \circ \tau_{\zeta}^d$ is the canonical decomposition of $\zeta^d$. Therefore, a right $R$-module $X$ belongs to the class $\CB_{\zeta^d}$ if and only if $\Ext^1_R(X,M^d)=0$ and $\Hom_R(X, \pi_{\zeta}^d)$ is an isomorphism.

(1) Let $X\in \CB_{\zeta^d}$. Then we have $\Ext^1_R(X,M^d)=0$. By \cite[Lemma 2.16(b)]{Gobel_Trlifaj:2006} we observe that $\Ext^1_R(X,M^d) \cong \Tor^{R}_1(X,M)^d$, so that $\Ext^1_R(X,M^d)=0$ if and only if $\Tor^{R}_1(X,M)^d=0$, and this is equivalent to
$\Ext_R^1(M,X^d)=0$ (by the left-side version of \cite[Lemma 2.16(b)]{Gobel_Trlifaj:2006}). Now we can use \cite[Lemma 2.16(a)]{Gobel_Trlifaj:2006} and its left-side version to observe that there are natural isomorphisms of functors
$$\Hom_R(X,(-)^d)\overset{\cong}\to (X\otimes_R -)^d \overset{\cong}\to \Hom_R(-,X^d),$$
and it follows that $\Hom_{R}(\pi_{\zeta},X^d)$ is an isomorphism. Therefore, $X^d\in\CALD_\zeta$.

(2) Suppose that $Y\in\CALD_\zeta$. Then $\Ext_R^1(M,Y)=0$, hence $\Ext^{1}_R(M,Y)^d=0$, and we can use the left version of
\cite[Lemma 2.16(d)]{Gobel_Trlifaj:2006} to obtain $\Tor_1^R(Y^d,M)=0$.
Therefore, $\Tor_1^R(Y^d,M)^d=0$, and it follows by  \cite[Lemma 2.16(b)]{Gobel_Trlifaj:2006} that $\Ext_R^1(Y^d,M^d)=0$.

Moreover, $\Hom_R(\pi_{\zeta},Y)$ is an isomorphism, hence $\Hom_R(\pi_{\zeta},Y)^d$ is an isomorphism. Now we can use
the left version of \cite[Lemma 2.16(c)]{Gobel_Trlifaj:2006} to observe that
$$Y^d \otimes_R \pi_{\zeta} :Y^d \otimes_R P_1 \to Y^d\otimes_R U$$ is an
isomorphism, hence $(Y^d\otimes_R \pi_{\zeta} )^d$ is an isomorphism. By \cite[Lemma 2.16(a)]{Gobel_Trlifaj:2006} it follows that
$\Hom_R(Y^d,\pi_{\zeta}^d)$ is an isomorphism, and the proof is complete.

(3) Suppose that $M$ is partial silting with respect to $\zeta$. We will prove that $M^d$ is partial cosilting with respect to $\zeta^d$. Since $M \in \CALD_\zeta$, it follows, by (2), that  $M^d\in \CB_{\zeta^d}$.

Applying the same techniques as before, we observe that we have a natural isomorphism $$\Ext^1_R(-,M^d)\cong \Tor^{R}_1(-,M)^d.$$ But $E$ is an injective cogenerator,
so for every right $R$-module $X$ we have $$\Ext^1_R(X,M^d)=0\textrm{ if and only if }\Tor^{R}_1(X,M)=0.$$ Since the left modules
$M$ and $U$ are finitely presented, it follows by \cite[Theorem A]{strebel} that the functor $\Tor^{R}_1(-,M)$ commutes with direct products, hence the class ${^{\perp} M^d}$ is closed under direct products.

In the same way, since $E$ is an injective cogenerator we have that $\Hom_R(X,\pi_{\zeta}^d)$ is an isomorphism if and only if the homomorphism $X\otimes_R\pi_{\zeta}$ is an isomorphism, and it follows
that the class of all right $R$-modules $X$ such that $\Hom_R(X,\pi_{\zeta}^d)$ is an isomorphism is closed under direct products (we note that $P_1$ and $U$ are finitely presented, hence the tensor functor induced by them commutes with respect direct products). Therefore the class $\CB_{\zeta^d}$ is closed under direct products. Hence $M^d$ is partial cosilting with respect to $\zeta^d$.

Now, suppose that $M^d$ is partial cosilting with respect to $\zeta^{d}$. A left $R$-module $X$ is in $\CALD_\zeta$ if and only if $\Ext_R^1(M,X)=0$ and $\Hom_R(\pi_{\zeta},X)$ is an isomorphism. Since $M$, $P_1$ and $U$ are finitely presented, it follows that the functors $\Ext_R^1(M,-)$, $\Hom_R(P_{1},-)$ and $\Hom_R(U,-)$ commute with direct sums. Therefore $\CALD_\zeta$ is closed under direct sums.

In order to prove that $M\in\CALD_\zeta$, let us observe that $\Ext_R^1(M,M)=0$ if and only if $\Ext_R^1(M,M)^d=0$. Since $M$ has a projective resolution in which the first three projective modules are finitely presented, this is equivalent to $\Tor^R_1(M^d,M)=0$. Applying again \cite[Lemma 2.16(b)]{Gobel_Trlifaj:2006}, this is equivalent to $$\Ext_R^1(M^d,M^d)\cong \Tor^R_1(M^d,M)^d=0.$$ Since $M^{d} \in {^{\perp}M^{d}}$, it follows that $\Ext_R^1(M,M)=0$.

We have to prove that $\Hom_R(\pi_{\zeta},M)$ is an isomorphism. Since $E$ is an injective cogenerator, this is equivalent to $\Hom_R(\pi_{\zeta},M)^d$ is an isomorphism. Applying the Hom-Tensor relations, we obtain that $\Hom_R(\pi_{\zeta},M)^{d}$ is an isomorphism if and only if $M^d\otimes_R \pi_{\zeta}$ is an isomorphism. Again, this is equivalent to $(M^d \otimes_R \pi_{\zeta})^d$ is an isomorphism, and this is true if and only
if $\Hom_R(M^d,\pi_{\zeta}^d)$ is an isomorphism.
\end{proof}

Let us recall from \cite{Wisbauer:1991} that if
$M$ is an $R$-module then $\sigma[M]$ denotes the full subcategory of $\Modr R$ whose objects are submodules of $M$-generated modules.
Recall that $\sigma[M]$ is a Grothendieck category, and if $E$ is an injective cogenerator in $\Modr R$ then
$\mathrm{Tr}_M(E)=\sum_{f\in\Hom_R(M,E)}\Ima(f)$,
the maximal $M$-generated submodule of $E$, is an injective cogenerator for the category $\sigma[M]$.

\begin{theorem}\label{main-cosilting}
Let $T$ be an $R$-module and let $E$ be an injective cogenerator in $\Modr R$. If $0\to T\to Q_0\overset{\zeta}\to Q_1$ is an
injective copresentation for $T$ then the following are equivalent:
\begin{enumerate}[{\rm (1)}]
\item $T$ is cosilting with respect to $\zeta$;

\item $T$ has the following properties: \begin{enumerate}[{\rm (a)}]
\item $T$ is partial cosilting with respect to $\zeta$, and
\item  there exists an exact sequence
$$0\to T_1\to T_0\overset{\gamma}\to E$$
such that $T_0,T_1\in\Prod(T)$ and for every $T'\in\CB_\zeta$ the homomorphism $\Hom_{R}(T',\gamma)$ is epic.
\end{enumerate}
\end{enumerate}
\end{theorem}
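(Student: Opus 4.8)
The plan is to prove the two implications separately, exploiting the characterization of cosilting modules via torsion pairs from Corollary \ref{torsion_pair} together with the closure properties of $\CB_\zeta$ established in Lemma \ref{closure_prop}.

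For $(1)\Rightarrow(2)$, assume $T$ is cosilting, so $\CB_\zeta=\Cogen(T)$ and in particular $T$ is partial cosilting, giving (a). For (b), I would apply $\Hom_R(-,T)$-approximation techniques as in the proof of Corollary \ref{torsion_pair}(3): since $E$ is an injective cogenerator, $E$ itself need not lie in $\Cogen(T)$, so one should work with its trace $\mathrm{Tr}_T(E)$, or more directly, build a $\Cogen(T)$-approximation of $E$. Concretely, by \cite[Lemma 4.2.1]{Colby_Fuller:2004} applied to $E$, there is a monomorphism $E/\mathrm{Rej}_T(E)\to T^I$ with $\Hom_R(-,T)$ of it epic; setting $T_0$ to be a suitable product of copies of $T$ surjecting appropriately and $T_1$ its kernel, I obtain the sequence $0\to T_1\to T_0\overset{\gamma}\to E$. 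The point is that $T_1\in\Cogen(T)=\CB_\zeta\subseteq{^\perp T}$, so applying $\Hom_R(T',-)$ to this sequence for $T'\in\CB_\zeta$, the connecting map lands in $\Ext^1_R(T',T_1)$; but $T_1\in\Prod(T)$ and $T'\in{^\perp T}$ (Lemma \ref{inclusions}) forces $\Ext^1_R(T',T_1)=0$, whence $\Hom_R(T',\gamma)$ is epic. Ensuring $T_0,T_1\in\Prod(T)$ and that $\gamma$ is a genuine $\CB_\zeta$-precover of $E$ requires care, but is essentially the dual of the silting construction.

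For $(2)\Rightarrow(1)$, the goal is the inclusion $\CB_\zeta\subseteq\Cogen(T)$ (the reverse inclusion is Lemma \ref{inclusions}). Take $X\in\CB_\zeta$. Since $E$ is an injective cogenerator, there is a monomorphism $X\hookrightarrow E^J$ for some set $J$. Using the sequence from (b), form the pullback of $E^J \leftarrow T_0^J \leftarrow$ along $X\hookrightarrow E^J$: because $\CB_\zeta$ is closed under direct products (partial cosilting, property (b) of the definition) and under submodules (Lemma \ref{closure_prop}(2), as $Q_1$ is injective), the term $T_1^J$ lies in $\CB_\zeta$, and one gets an exact sequence $0\to T_1^J\to P\to X\to 0$ with $P$ the pullback. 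The condition that $\Hom_R(T',\gamma)$ is epic for all $T'\in\CB_\zeta$ should force $\Hom_R(X,\gamma^J)$ epic, hence by a diagram chase the pullback $P$ embeds in $T_0^J\in\Prod(T)$, so $P\in\Cogen(T)$. Then $P,T_1^J\in\Cogen(T)$ and $X\in{^\perp T}$ (Lemma \ref{inclusions}), so Corollary \ref{torsion_pair}(1) — $\Cogen(T)$ being a torsion-free class, hence closed under extensions — yields $X\in\Cogen(T)$. Thus $\CB_\zeta=\Cogen(T)$ and $T$ is cosilting.

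The main obstacle I anticipate is in $(2)\Rightarrow(1)$: verifying that the precovering property of $\gamma$ with respect to all of $\CB_\zeta$ propagates correctly to powers $\gamma^J$ and through the pullback so as to guarantee $P\in\Cogen(T)$ rather than merely $P\in\CB_\zeta$. One must be careful that "$\Hom_R(T',\gamma)$ epic for all $T'\in\CB_\zeta$" is used with $T'=X$ \emph{and} with the pullback term, and that the embedding $X\hookrightarrow E^J$ is compatible — this is where the interplay between $\Prod(T)$, $\Cogen(T)$, and $\CB_\zeta$ is most delicate, and where the injectivity of $Q_0$ (via Lemma \ref{closure_prop}(4), (5)) may be needed to control $\Ext^1$ terms. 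The direction $(1)\Rightarrow(2)$, by contrast, is a fairly routine application of the $T$-cogeneration machinery from \cite{Colby_Fuller:2004}.
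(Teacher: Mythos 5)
Your direction $(2)\Rightarrow(1)$ contains the right key idea but ends with a wrong step, and your direction $(1)\Rightarrow(2)$ has a genuine gap: the sequence in (b) is never constructed, and the route you sketch for it would fail. Concretely, \cite[Lemma 4.2.1]{Colby_Fuller:2004} produces a monomorphism $E/\mathrm{Rej}_{T}(E)\to T^{I}$, i.e.\ a map \emph{out of} $E$, whereas (b) requires a map $\gamma:T_0\to E$ \emph{into} $E$ with both $T_0$ and $\Ker(\gamma)$ in $\Prod(T)$. Your ``suitable product of copies of $T$ surjecting appropriately'' onto $E$ does not exist in general: $\gamma$ in the statement is not required to be epic, and in the paper's construction its image is only $\mathrm{Tr}_{R/I}(E)$, the trace of $R/I$ in $E$, where $I$ is the torsion part of $R$ for the torsion pair $({^{\circ}T},\Cogen(T))$. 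The actual construction is the hard core of the theorem: one shows $\Cogen(T)\subseteq\sigma[R/I]$ (so that every map from a module of $\CB_\zeta$ to $E$ lands in $\mathrm{Tr}_{R/I}(E)$), covers $\mathrm{Tr}_{R/I}(E)$ --- an injective cogenerator of $\sigma[R/I]$ --- by some $T^{\Omega}$, uses $\Cogen(T)=\Copres(T)$ (Corollary \ref{cogen=copres}) on the kernel, forms a pushout to obtain $0\to T^{Z}\to X\to \mathrm{Tr}_{R/I}(E)\to 0$ with $X\in\Cogen(T)$, and finally proves $X\in\Prod(T)$ by showing that every extension of $X$ by a module of $\Cogen(T)$ inside $\sigma[R/I]$ splits. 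Your observation that $\Ext^{1}_{R}(T',T_1)=0$ forces $\Hom_R(T',\gamma)$ to be epic is correct once this sequence exists, but the existence is precisely what must be proved; none of the $\sigma[R/I]$, trace, or splitting machinery is anticipated in your sketch.

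For $(2)\Rightarrow(1)$, your idea of lifting the embedding $X\hookrightarrow E^{J}$ through $\gamma$ (using the hypothesis with $T'=X$) is exactly the paper's argument, which does it pointwise: for $0\neq x\in X$ choose $\alpha:X\to E$ with $\alpha(x)\neq 0$, lift it to $T_0$, and project onto a copy of $T$. However, your concluding step is incorrect as written: from $0\to T_1^{J}\to P\to X\to 0$ with $P,T_1^{J}\in\Cogen(T)$, closure of $\Cogen(T)$ under extensions concerns the middle term and says nothing about the quotient $X$. The detour through the pullback is unnecessary: the lift $\overline{\alpha}:X\to T_0^{J}$ of the monomorphism $X\to E^{J}$ satisfies $\gamma^{J}\circ\overline{\alpha}$ mono, hence is itself a monomorphism, so $X$ embeds in $T_0^{J}\in\Prod(T)$ and $X\in\Cogen(T)$ at once (equivalently, the lift splits $P\to X$, so $X$ is a direct summand of $P$).
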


\begin{proof}
(1)$\Rightarrow$(2) Suppose that $T$ is cosilting with respect to $\zeta$. Let $I$ be the torsion part of $R$ with respect to the torsion pair
$({^{\circ}T},\Cogen(T))$. Then for every $X\in \Cogen(T)$ the canonical homomorphism $\Hom_R(R/I,X)\to \Hom_R(R,X)$ is an isomorphism,
hence $X$ is $R/I$-generated. In particular $\Cogen(T)\subseteq \sigma[R/I]$.

Let $\alpha:R/I^{(\Lambda)}\to \mathrm{Tr}_{R/I}(E)$ be an epimorphism.
Since $R/I\in \Cogen(T)$, it follows that $R/I^{(\Lambda)}\in\Cogen(T)$, hence we have a monomorphism
$0\to R/I^{(\Lambda)}\to T^{\Omega}$. We can view this monomorphism in the category $\sigma[R/I]$, and we obtain that $\alpha$ can be
extended to an epimorphism $\beta:T^{\Omega}\to \mathrm{Tr}_{R/I}(E)$, by the injectivity of $\mathrm{Tr}_{R/I}(E)$ in the category $\sigma[R/I]$. Now the kernel $K=\Ker(\beta)$ is $T$-cogenerated, hence, taking into account Corollary \ref{cogen=copres}, it is $T$-copresented. It follows that we have an exact sequence $$0\to K\to T^{Z}\to L\to 0$$ such that $L\in\Cogen(T)$.

Using all these data we construct the pushout diagram
\[\xymatrix{
 & 0 \ar[d] & 0 \ar[d] & & \\
0\ar[r] & K \ar[d]\ar[r] & T^{\Omega} \ar[d]\ar[r] & \mathrm{Tr}_{R/I}(E)\ar@{=}[d]\ar[r] &0 \\
0\ar[r] & T^{Z} \ar[d]\ar[r] & X \ar[d]\ar[r] & \mathrm{Tr}_{R/I}(E)\ar[r] & 0 \\
 & L \ar[d]\ar@{=}[r] & L \ar[d] & & \\
 & 0  & 0  & & .\\
}\]
%Note that $X\in \sigma[R/I]$ by \cite[15.2]{wisb}.
Since $\Cogen(T)$ is a torsion-free class, it  is closed under extensions, and it follows, by the second vertical exact sequence, that $X\in\Cogen(T)$. Therefore, the exact sequence
$$0\to T^{Z}\to X\to \mathrm{Tr}_{R/I}(E)\to 0$$
is in $\sigma[R/I]$.

Let $V\in\Cogen(T)$. If we apply the covariant extension functor $\Ext_{\sigma[R/I]}(V,-)$ on the above exact sequence, we obtain the following exact sequence of abelian groups $$\Ext_{\sigma[R/I]}(V,T^Z)\to \Ext_{\sigma[R/I]}(V,X)\to \Ext_{\sigma[R/I]}(V,\mathrm{Tr}_{R/I}(E)).$$
In this exact sequence the first group is zero since $$\Ext_{\sigma[R/I]}(V,T^Z)\leq \Ext^1_{R}(V,T^Z)=0,$$ and the third group
is also trivial since $\mathrm{Tr}_{R/I}(E)$ is injective in $\sigma[R/I]$.

Therefore, every short exact sequence $$0\to X\to U\to V\to 0$$
with $U\in\sigma[R/I]$ and $V\in\Cogen(T)$ splits.

By Corollary \ref{cogen=copres}, there
exists a short exact sequence $$0\to X\to T^{\Gamma}\to W\to 0$$ with $W\in \Cogen(T).$ Since this is an exact sequence in $\sigma[R/I]$ and
$\Cogen(T)\subseteq\sigma[R/I]$, it splits,
hence $X\in\Prod(T)$.
Therefore, in the short exact sequence  $$0\to T^Z\to X\to \mathrm{Tr}_{R/I}(E)\to 0$$ the first two terms are in $\Prod(T)$ and
$\mathrm{Tr}_{R/I}(E)$ is a submodule of $E$, so it can be viewed as an exact sequence
\begin{equation*}\tag{$\sharp$} 0\to T^Z\to X\overset{\gamma}\to E\end{equation*} as in (b).

{Moreover, if $U$ belongs to $\CB_\zeta=\Cogen(T)$ and $\alpha:U\to E$ is a homomorphism then $\Ima(\alpha) \subseteq \mathrm{Tr}_{R/I}(E)$, since $U$ is an $R/I$-generated module  by what has been shown so far. Then $\Hom_{R}(U,E) = \Hom_{R}(U,\mathrm{Tr}_{R/I}(E))$, and we obtain that the natural homomorphism $$\Hom_R(U,\gamma):\Hom_R(U,X)\to \Hom_R(U,E)$$ is epic since $\Ext^1_R(U,T^Z)=0$.
Therefore, the exact sequence $(\sharp)$ satisfies all conditions stated in (b).
}

(2)$\Rightarrow$(1) It is enough to prove that $\CB_\zeta\subseteq \Cogen(T)$. Let $X\in \CB_\zeta$. Let $E$ be an injective cogenerator for $\Modr R$. Then, for every non-zero element $x\in X$, there exists a homomorphism $\alpha:X\to E$ such that $\alpha(x)\neq 0$. By (b) we can lift
$\alpha$ to a homomorphism $\overline{\alpha}:X\to T_0$. From $T_0\in\Prod(T)$, it follows that, for every $0\neq x\in X$, there exists a homomorphism
$\alpha':X\to T$ such that $\alpha'(x)\neq 0$. It follows that $X$ is $T$-cogenerated, and the proof is complete.
\end{proof}

%\begin{proposition} Let $T$ be an $R$-module. If $T$ is cosilting, then $T$ is partial cosilting.
%\end{proposition}
%\begin{proof} If $T$ is cosilting with respect to the injective copresentation $\zeta : Q_{0} \to Q_{1}$ of $T$, we have that $\CB_{\zeta}$ is closed under direct products, because $\Cogen(T)$ is closed under direct products, and $T\in\CB_{\zeta}$. Hence $T$ is partial cosilting with respect to $\zeta$.
%\end{proof}

%\begin{definition} We say that an $R$-module $T$ is {\sl cosincere} if $\Hom_{R}(T,Q) \neq 0$, for all non-zero injective $R$-module $Q$.
%\end{definition}

%\begin{proposition} Let $T$ be an $R$-module with $\mathrm{id}(T) \leq 1$. Then $T$ is cotilting if and only if $T$ is cosilting and cosincere.
%\end{proposition}

We obtain the cosilting version of \cite[Proposition 5.2.7]{Colby_Fuller:2004}.

\begin{corollary}\label{dual}
Suppose that we are in the same hypotheses as in Proposition {\rm \ref{dual-partial}(3)}.
\begin{enumerate}[{\rm (1)}]

\item If $M$ is a silting module with respect to $\zeta$ then $M^d$ is a cosilting module with respect to $\zeta^d$.

\item
Suppose that $R$ is an Artin algebra, $S$ is the center of $R$ and $(-)^d$ is the standard duality between finitely presented left and
right modules induced by the injective
envelope of $S/J(S)$.
Then $M$ is a silting module with respect to $\zeta$ if and only if
$M^d$ is a cosilting module with respect to $\zeta^d$.
\end{enumerate}
\end{corollary}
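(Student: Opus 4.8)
The plan is to deduce (1) from Theorem~\ref{main-cosilting} together with Proposition~\ref{dual-partial}, and then to obtain (2) from (1) by exploiting that over an Artin algebra $(-)^d$ restricts to a genuine duality on finitely presented modules, with $(-)^{dd}\cong\mathrm{id}$.

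For (1), assume $M$ is silting with respect to $\zeta$; I want to verify condition~(2) of Theorem~\ref{main-cosilting} for $M^d$ and the injective copresentation $0\to M^d\to P_0^d\overset{\zeta^d}{\to} P_1^d$ of Proposition~\ref{dual-partial}. Part~(a) is immediate: $M$ is in particular partial silting, so $M^d$ is partial cosilting with respect to $\zeta^d$ by Proposition~\ref{dual-partial}(3). For part~(b) I would invoke the characterization of silting modules that is the analogue for silting of Theorem~\ref{main-cosilting} (essentially the description of \cite{Angeleri_Marks_Vitoria:2015}): there is an exact sequence $R\overset{\gamma}{\to} T_0\to T_1\to 0$ with $T_0,T_1\in\Add(M)$ such that $\Hom_R(\gamma,M')$ is epic for every $M'\in\CALD_\zeta$. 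Applying the exact contravariant functor $(-)^d$ produces an exact sequence $0\to T_1^d\to T_0^d\overset{\gamma^d}{\to} R^d$, with $T_0^d,T_1^d\in\Prod(M^d)$ since $(M^{(\Lambda)})^d\cong(M^d)^\Lambda$. Now $R^d=\Hom_S({}_RR,E)$ is an injective cogenerator of $\Modr R$, because $\Hom_R(X,R^d)\cong\Hom_S(X\otimes_R R,E)\cong\Hom_S(X,E)$ naturally in the right module $X$ and $\Hom_S(-,E)$ is exact and faithful. Finally, for $T'\in\CB_{\zeta^d}$ we have $(T')^d\in\CALD_\zeta$ by Proposition~\ref{dual-partial}(1), so $\Hom_R(\gamma,(T')^d)$ is epic; transporting this along the natural isomorphism $\Hom_R(-,(T')^d)\cong\Hom_R(T',(-)^d)$ used in the proof of Proposition~\ref{dual-partial} shows that $\Hom_R(T',\gamma^d)$ is epic. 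Thus the dualized sequence satisfies the requirements of Theorem~\ref{main-cosilting}(2)(b) with $R^d$ playing the role of the injective cogenerator, and Theorem~\ref{main-cosilting} gives that $M^d$ is cosilting with respect to $\zeta^d$.

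For (2), under the Artin algebra hypothesis all the $P_i$ and $M$ are finitely presented, $(-)^d$ is a duality between $\modr R$ and $R\lmod$ with $(-)^{dd}\cong\mathrm{id}$ naturally, and hence $M^{dd}\cong M$ and $\zeta^{dd}\cong\zeta$ as morphisms; the implication ``$M$ silting $\Rightarrow$ $M^d$ cosilting'' is (1). For the converse I would run the argument of (1) in the mirrored situation: if $M^d$ is cosilting with respect to $\zeta^d$, then $\zeta^{dd}\cong\zeta$ is a projective presentation of $M^{dd}\cong M$, so Proposition~\ref{dual-partial}(3) gives that $M$ is partial silting with respect to $\zeta$, and Theorem~\ref{main-cosilting} applied to $M^d$ furnishes an exact sequence as in Theorem~\ref{main-cosilting}(2)(b); dualizing it (which is legitimate here since all the modules occurring can be taken finitely generated) yields the exact sequence $R\to T_0\to T_1\to 0$ in $\Add(M)$ required by the silting analogue of Theorem~\ref{main-cosilting}, so $M$ is silting with respect to $\zeta$.

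I expect the main obstacle to lie in two places. In (1) the delicate point is the bookkeeping at the end---matching the epimorphism conditions through the Hom--tensor natural isomorphisms and confirming that $R^d$ is an injective cogenerator so that Theorem~\ref{main-cosilting} applies. In (2) the obstacle is the reverse implication: to run the argument of (1) backwards one needs that over an Artin algebra the approximation sequences produced by Theorem~\ref{main-cosilting} can be chosen among finitely generated modules, so that $(-)^{dd}\cong\mathrm{id}$ can be applied verbatim; this is exactly where the hypothesis that $R$ is Artinian (finite length, functorial finiteness) is essential, and without it only ``silting $\Rightarrow$ cosilting'' survives.
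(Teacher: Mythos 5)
Your part (1) follows the paper's proof essentially verbatim: invoke \cite[Proposition 3.11]{Angeleri_Marks_Vitoria:2015} for the sequence $R\overset{\gamma}\to T_0\to T_1\to 0$, dualize, observe that $R^d$ is an injective cogenerator, and transport the approximation property through the Hom--tensor isomorphisms of Proposition \ref{dual-partial}; that direction is complete. The same goes for the forward implication of (2).

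The converse in (2), however, has a genuine gap at precisely the point you flag as ``the main obstacle'' and then do not resolve. Theorem \ref{main-cosilting} only delivers a sequence $0\to T_1\to T_0\overset{\gamma}\to E$ with $T_0,T_1\in\Prod(M^d)$, i.e.\ direct summands of possibly infinite products; the relation $(-)^{dd}\cong\mathrm{id}$ is available only on finitely presented modules, so your parenthetical claim that ``all the modules occurring can be taken finitely generated'' is exactly what must be proved, and it is the substance of the paper's argument. The paper retraces the proof of Theorem \ref{main-cosilting} with $E=R^d$ finitely generated, showing successively that the index sets $\Lambda$, $\Omega$ and $Z$ may be taken finite and then using \cite[Corollary 1.3.3]{Colby_Fuller:2004} (a product of copies of a pure-injective/finite-length module is a summand of a direct sum) to conclude $T_0,T_1\in\mathrm{add}(M^d)$. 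A second, smaller omission: even once the sequence is finitely generated, the condition you need for the silting criterion is that $\Hom_R(\gamma^d,X)$ is epic for \emph{every} $X\in\CALD_\zeta$, and such $X$ need not be finitely generated, so you cannot apply $(-)^{dd}\cong\mathrm{id}$ to $X$ directly. The paper circumvents this by converting ``$\Hom_R(Y,\gamma^{dd})$ epic for $Y\in\CB_{\zeta^d}$'' into ``$Y\otimes_R\gamma^d$ mono,'' specializing to $Y=X^d$, and then dualizing back to get $\Hom_R(\gamma^d,X)$ epic. Both steps need to be supplied before your argument closes.
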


\begin{proof}
(1) Suppose that $M$ is silting with respect to $\zeta$. By \cite[Proposition 3.11]{Angeleri_Marks_Vitoria:2015} there exists an exact sequence
$$R\overset{\alpha}\to M_0\to M_1\to 0$$ such that $M_0, M_1\in\Add(M)$, and for every left $R$-module $Y\in \CALD_\zeta$ the homomorphism
$\Hom_{R}(\alpha,Y)$ is an epimorphism. Applying the functor $(-)^d$ we obtain an exact sequence
$$0\to M_1^d\to M_0^d\overset{\alpha^d}\to R^d$$ such that $M_0^d,M_1^d\in\Prod(M^d)$ and $R^d$ is an injective cogenerator for $\Modr R$. By
Theorem \ref{main-cosilting} and Proposition \ref{dual-partial}(3), it is enough to prove that for every $X\in\CB_{\zeta^d}$ the
homomorphism $\Hom_R(X,\alpha^d)$ is epic.

Let $X\in \CB_{\zeta^d}$. By Proposition \ref{dual-partial}(1) we know that $X^d\in\CALD_\zeta$, so the homomorphism
$\Hom_R(\alpha,X^d)$ is epic. Moreover, for every $Y\in R\Mod$ we have the natural isomorphisms
$$\Hom_R(X,Y^d)\cong (X\otimes_R Y)^d\cong \Hom_{R}(Y,X^d),$$ and if we take $Y=M_0$, respectively $Y=R$, we obtain that $\Hom_{R}(X,\alpha^d)$ is
an epimorphism.

{(2) Suppose that $T=M^d$ is a cosilting right $R$-module with respect to $\zeta^d$. Then $T$ is partial cosilting with respect to $\zeta^d$.
We observe that in the proof of Theorem \ref{main-cosilting} we can choose $E=R^d$ as an injective cogenerator. Since $E$ is finitely generated we can suppose that the set $\Lambda$ is finite. Therefore $(R/I)^{(\Lambda)}$ is finitely generated. Moreover, using \cite[Corollary 1.3.3]{Colby_Fuller:2004}, it follows that $T^{\Omega}$ is a direct summand of a direct sum $T^{(\Gamma)}$ of copies of $T$. Since $(R/I)^{(\Lambda)}$ is finitely generated, it follows that we can embed it in a finite direct sum of copies of $T$. Therefore we can suppose that $\Omega$ is finite. Therefore $K$ is finitely generated. Let $0\to K\to T^Z\to L\to 0$ be an exact sequence such that $L\in \Cogen(T)$. Since $T^Z$ is a direct summand of a direct sum $T^{(Z')}$ it follows that we can embed $K$ in a short exact sequence 
$0\to K\overset{f}\to T^{(Z')}\to L'\to 0$ such that $L'\in \Cogen(T)$. But $K$ is finitely generated, hence there exists a finite subset $Z''$ of $Z'$ such that $\alpha(K)\subseteq T^{(Z'')}$, and it is not hard to see that $T^{(Z'')}/f(K)\in\Cogen(T)$ since it can be embedded in $L'$. Therefore, in the proof of Theorem \ref{main-cosilting} we can suppose that $Z$ is finite. 
Then the module $X$ used in the proof of this theorem is finitely generated, and we use one more time \cite[Corollary 1.3.3]{Colby_Fuller:2004} to conclude that $X\in \mathrm{add}(T)$, where $\mathrm{add}(T)$ denotes the class of all direct summands of finite direct sums of copies of $T=M^d$.
}

Therefore, we can construct an exact sequence
$$0\to T_1\to T_0\overset{\gamma}\to E$$ such that $E=R^d$ and $T_0,T_1\in\mathrm{add}(M^d)$. Since the functors $(-)^d$ give a duality
between finitely presented left and right $R$-modules we can write this exact sequence as
$$0\to T_1\to T_0^{dd}\overset{\gamma^{dd}}\to E^{dd},$$
and we observe that for every $Y\in \CB_{\zeta^d}$ we have that
$$\Hom_R(Y,\gamma^{dd}):\Hom_{R}(Y,T_0^{dd})\to \Hom_{R}(Y,E^{dd})$$
is an epimorphism. This implies that $$(Y\otimes_R \gamma^d)^d:(Y\otimes_R T_0^d)^d\to (Y\otimes_R E^d)^d$$ is an epimorphism, so
$Y\otimes_R\gamma^d$ is a monomorphism.

Let $X\in\CALD_\zeta$.  By Proposition \ref{dual-partial}(2) we obtain that $X^d\in \CB_{\zeta^d}$, so
$$(X^d\otimes_R \gamma^d):X^d\otimes_R E^d \to X^d\otimes_R T_0^d$$ is a monomorphism. This implies that
$$\Hom_R(\gamma^d,X)^d:\Hom_R(E^d,X)^d\to \Hom_{R}(T_0^d,X)^d$$ is a monomorphism, hence $\Hom_{R}(\gamma^d,X)$ is an epimorphism.

Therefore, since $R=E^d$, we have an exact sequence $$R\overset{\gamma^d}\to T_0^d\to T_1^d \to 0$$ such that
$T_0^d$, $T_1^d\in \mathrm{add}(M)$, and for every $X\in \CALD_{\zeta}$ the homomorphism $\Hom_{R}(\gamma^d,X)$ is epic.
By \cite[Proposition 3.11]{Angeleri_Marks_Vitoria:2015} we obtain that $M$ is silting with respect to $\zeta$, and the proof is complete.
\end{proof}

\begin{remark}(added in proof) It was proved by Angeleri and Hrbek in \cite[Proposition 3.4]{An_Hr:2017} that the statement (1) in Corollary \ref{dual} is valid for all silting modules. The main ingredient in this general proof is the fact that the classes of all modules which are (co)generated by (co)silting modules are definable (we refer to Corollary \ref{definable} for the cosilting case).
\end{remark}

For reader's convenience we will present an example which shows that the converse proved in Corollary \ref{dual}(2) is not valid for the general case.

\begin{example}\label{ex1-d}
 Let $\PP$ be the set of all primes.
If $R=S=\Z$, we consider the contravariant functor $(-)^d=\Hom(-,\prod_{p\in\PP}\Z(p^\infty)):\Modr \Z\to\Modr \Z$ induced by the direct product $\prod_{p\in\PP}\Z(p^\infty)$ of all injective envelopes for all simple $\Z$-modules. If $M=\langle \frac{1}{p}\mid p\in \PP\rangle$ then for every prime $p$ we consider a short exact sequence $0\to L\to M\to \oplus_{q\neq p}\Z(q)\to 0$ such that $L\cong \Z$, and it follows that $\Hom(M,\Z(p^\infty))\cong \Hom(L,\Z(p^\infty))\cong
\Z(p^\infty)$. Therefore, $M^d\cong \prod_{p\in\PP}\Z(p^\infty)$. If $0\to F_{-1}\overset{\zeta}\to F_0\to M\to 0$ is a projective (free) resolution for $M$ then we obtain a split exact sequence of injective abelian groups $0\to M^d\to F_0^d\overset{\zeta^d}\to F_{-1}^d\to 0$. Since $M^d$ is an injective cogenerator for $\Modr \Z$, it follows that $M^d$ is cosilting with respect to $\zeta^d$. But $M$ is not (partial) silting since $\Ext^1_\Z(M,M)\neq 0$.
\end{example}

We mention that if we start with a cosilting module $N$ then its dual $N^d$ is not necessarily a (partial) silting module. In order to see this we will use the examples constructed in \cite[Section 3]{Ang-proc}.

\begin{example}
We will use the same setting as in Example \ref{ex1-d}. Then $N=\BBQ\oplus \BBQ/\BBZ$ is a cotilting $\BBZ$-module. The dual module $N^d=\Hom(N,\prod_{p\in\PP}\Z(p^\infty))$ contains a direct summand isomorphic to $\BBQ$ and a direct summand isomorphic to the group $\widehat{\BBZ}_p$ of all $p$-adic integers. If we suppose that $N^d$ is partial silting then we obtain, using \cite[Remark 3.8(1)]{Angeleri_Marks_Vitoria:2015},  that $\Ext^1_\Z(\BBQ,\widehat{\BBZ}_p^{(\omega)})=0$, hence $\widehat{\BBZ}_p^{(\omega)}$ is a cotorsion group. But this is not true, by  \cite[Proposition 1.7]{BaSt}. Therefore, $N^d$ is not (partial) silting. By a similar argument it follows that the example constructed in \cite[Section 3.4]{Ang-proc} can be used to see that over tame hereditary artin algebras of infinite representation type there exists a cotilting module $W$ such that its dual $W^d$ is not (partial) silting.
\end{example}

We thank Lidia Angeleri-H\"ugel for indicating us the following result: the dual of \cite[Theorem 3.12]{Angeleri_Marks_Vitoria:2015} is also valid.

\begin{theorem}\label{pcs-ds}
Let $T$ be a partial cosilting $R$-module with respect to an injective copresentation $0\to T\to Q_0\overset{\zeta}\to Q_1$.
Then there exists an $R$-module $M$ and an injective copresentation  $0\to T\oplus M\to Q'_0\overset{\zeta'}\to Q'_1$ such that $T\oplus M$
is cosilting with respect to $\zeta'$ and $\CB_\zeta=\CB_{\zeta'}$.
\end{theorem}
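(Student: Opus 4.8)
The plan is to mimic the silting-module argument of \cite[Theorem 3.12]{Angeleri_Marks_Vitoria:2015}, using the characterization of cosilting modules given in Theorem \ref{main-cosilting}. Since $T$ is partial cosilting, the class $\CB_\zeta$ is a torsion-free class (it is closed under submodules, extensions, direct sums and direct products by Lemma \ref{closure_prop} and the definition), and $\Cogen(T)\subseteq\CB_\zeta\subseteq{^{\perp}T}$ by Lemma \ref{inclusions}. Fix an injective cogenerator $E$ for $\Modr R$. The goal, by Theorem \ref{main-cosilting}, is to produce a module $M$ with an injective copresentation $0\to T\oplus M\to Q'_0\overset{\zeta'}\to Q'_1$ such that $T\oplus M$ is partial cosilting with $\CB_{\zeta'}=\CB_\zeta$, together with an exact sequence $0\to N_1\to N_0\overset{\gamma}\to E$ with $N_0,N_1\in\Prod(T\oplus M)$ and $\Hom_R(T',\gamma)$ epic for every $T'\in\CB_\zeta$. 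The module $M$ will be built as the kernel of a suitable map involving $E$.

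First I would manufacture the candidate for $\gamma$. Choose an embedding $\iota:T\hookrightarrow E'$ into a product of copies of $E$, or more cleverly: since $\CB_\zeta$ is a torsion-free class with torsion pair $({^{\circ}T},\CB_\zeta)$ (this is the cosilting version of Corollary \ref{torsion_pair}(1), available because $T\in\CB_\zeta$ and $\CB_\zeta$ is closed under products and submodules), let $\overline E$ be the $\CB_\zeta$-torsion-free part of $E$, i.e.\ $\overline E=E/t(E)$ where $t(E)$ is the ${^{\circ}T}$-torsion submodule. Then $\overline E\in\CB_\zeta$, and for every $T'\in\CB_\zeta$ the projection $E\to\overline E$ induces an isomorphism $\Hom_R(T',E)\cong\Hom_R(T',\overline E)$, since $\Hom_R(T',t(E))=0$. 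Now take a short exact sequence $0\to K\to Q\to\overline E\to 0$ with $Q\in\Prod(T)$: this is possible because $\overline E\in\CB_\zeta\subseteq\Cogen(T)$—wait, we only have $\CB_\zeta\subseteq{^{\perp}T}$, not $\subseteq\Cogen(T)$, since $T$ is only partial cosilting. So instead embed $\overline E$ in a product $T^\Omega$ only if $\overline E\in\Cogen(T)$; in general pick a product $T^{\Omega}$ and the canonical map $\overline E\to T^{\Omega}$ need not be mono. The correct move (as in \cite{Angeleri_Marks_Vitoria:2015}) is: form the pushout of $0\to T\to Q_0\to \Ima\zeta\to 0$ along a map to build the new copresentation, and set $M$ to be an appropriate kernel so that $T\oplus M\in\Cogen(T\oplus M)$ forces $\overline E\in\Cogen(T\oplus M)$.

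Concretely, the key steps in order are: (i) set $M=\ker(Q\to\overline E)$ for some epimorphism $Q\to\overline E$ with $Q$ a product of copies of $T$—but to get such an epimorphism we first replace $\overline E$ by something $T$-generated, or we accept $Q=T^\Omega$ and $M$ the kernel of the composite $T^\Omega\to\overline E\hookrightarrow E$ (not surjective, but that's fine for property (b) of Theorem \ref{main-cosilting}, which only requires $\Hom_R(T',\gamma)$ epic, not $\gamma$ epic). Then define $\gamma:T^\Omega\to E$ as this composite; property (b) for $T'\in\CB_\zeta$ holds because $\Hom_R(T',-)$ applied to $0\to M\to T^\Omega\to\overline E\to 0$ is exact on the right (as $\Ext^1_R(T',M)$—need $M\in{^{\perp}T'}$... this is where care is needed) and $\Hom_R(T',E)=\Hom_R(T',\overline E)$. (ii) Build the injective copresentation of $T\oplus M$ by taking the direct sum of the given copresentation of $T$ with an injective copresentation of $M$, then check $\CB_{\zeta'}=\CB_\zeta\cap\CB_{\zeta_M}$; one must verify $\CB_\zeta\subseteq\CB_{\zeta_M}$, i.e.\ every $T'\in\CB_\zeta$ has $\Hom_R(T',\zeta_M)$ epic, equivalently (by Corollary \ref{remark_isomorphism}, since $\CB_\zeta\subseteq{^{\perp}T}\subseteq{^{\perp}M}$ as $M$ is a submodule-related object... again requires justification) $\Hom_R(T',\tau_{\zeta_M})$ iso. (iii) Conclude $T\oplus M$ is partial cosilting: $T\oplus M\in\CB_{\zeta'}$ since both $T$ and $M=\ker(T^\Omega\to\overline E)$ lie in $\CB_\zeta$ (here $M\in\CB_\zeta$ because $\CB_\zeta$ is closed under submodules and $M\subseteq T^\Omega\in\CB_\zeta$—this is clean!), and $\CB_{\zeta'}=\CB_\zeta$ is a torsion-free class. (iv) Apply Theorem \ref{main-cosilting}(2)$\Rightarrow$(1) with $N_0=T^\Omega$, $N_1=M$, $\gamma$ as above to conclude $T\oplus M$ is cosilting with respect to $\zeta'$.

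The main obstacle is verifying that $\Hom_R(T',\gamma)$ is epic for all $T'\in\CB_\zeta$, which amounts to showing $\Ext^1_R(T',M)=0$; since $M\subseteq T^\Omega$ sits in the exact sequence $0\to M\to T^\Omega\to \overline E\to 0$, the long exact sequence gives $\Hom_R(T',T^\Omega)\to\Hom_R(T',\overline E)\to\Ext^1_R(T',M)\to\Ext^1_R(T',T^\Omega)=0$ (the last vanishing because $T'\in{^{\perp}T}$ and ${^{\perp}T}$—wait, is ${^{\perp}T}$ closed under products? it is, since $\Ext^1_R(T',-)$ need not commute with products, but $\Ext^1_R(T',T^\Omega)$ embeds in $\prod\Ext^1_R(T',T)=0$... actually $\Ext^1$ does commute with products in the second variable, so yes). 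So $\Ext^1_R(T',M)=0$ iff $\Hom_R(T',T^\Omega)\to\Hom_R(T',\overline E)$ is epic. This is exactly the statement that $\overline E$ is $T$-generated "relative to $\CB_\zeta$". This fails in general—so one must instead take $Q$ to genuinely surject onto $\overline E$, i.e.\ choose $Q=T^{(\Lambda)}\oplus(\text{free cover})$... the resolution is to choose the epimorphism $T^\Omega\oplus R^{(\Delta)}\twoheadrightarrow\overline E$ using that $\overline E\in\Cogen(T)$ only after we know $\CB_\zeta=\Cogen(T\oplus M)$, which is circular. The honest fix, following \cite{Angeleri_Marks_Vitoria:2015} faithfully, is to choose $\gamma$ so that its image is all of $\overline E$ by taking a $T$-precover of $\overline E$ in $\CB_\zeta$; such precovers exist because $\CB_\zeta$, being a torsion-free class closed under products, is a \emph{covering} class for modules in a suitable sense—and this is precisely where the argument parallels the precovering hypothesis of Theorem \ref{main-cosilting}. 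I expect this step—producing the $\CB_\zeta$-approximation of $\overline E$ with domain in $\Prod(T)$—to be the crux, and it should follow by the same pushout construction used in the proof of (1)$\Rightarrow$(2) of Theorem \ref{main-cosilting}, applied now with $T$ replaced by the not-yet-constructed $T\oplus M$, resolved by taking $M$ to be exactly the kernel appearing in that construction.
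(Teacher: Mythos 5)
Your proposal correctly reduces the problem to producing the data required by Theorem \ref{main-cosilting}(2)(b) and you correctly diagnose the crux: one needs an exact sequence $0\to T_1\to T_0\overset{\gamma}\to E$ with $T_0,T_1\in\Prod(T\oplus M)$ such that $\Hom_R(T',\gamma)$ is epic for all $T'\in\CB_\zeta$, and your attempts to get this by surjecting a product $T^\Omega$ onto the torsion-free part $\overline E$ of $E$ run into exactly the circularity you describe ($\overline E\in\Cogen(T)$ is not available when $T$ is only partial cosilting). But the proposal then stops at a hand-wave: ``such precovers exist because $\CB_\zeta$ \dots is a covering class for modules in a suitable sense.'' That claim is unjustified and is essentially the whole content of the theorem, so this is a genuine gap, not a detail.

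The missing idea is the dual Bongartz pullback, performed against the \emph{injective copresentation} rather than against $T$ itself. Set $\Lambda=\Hom_R(E,Q_1)$ and let $\psi:E\to Q_1^{\Lambda}$ be the canonical (evaluation) map; define $M$ as the pullback of $Q_0^{\Lambda}\overset{\zeta^{\Lambda}}\to Q_1^{\Lambda}\overset{\psi}\leftarrow E$, i.e.\ $M=\Ker\bigl(\gamma:Q_0^{\Lambda}\oplus E\to Q_1^{\Lambda}\bigr)$. This single construction resolves every difficulty you flag at once: the row $0\to T^{\Lambda}\to M\overset{\epsilon}\to E$ (with $\epsilon$ the pullback projection) has first term in $\Prod(T)$ and middle term in $\Prod(T\oplus M)$; the factorization property holds because for $M'\in\CB_\zeta$ any $\beta:M'\to E$ has $\psi\beta$ factoring through $\zeta^{\Lambda}$ (as $\CB_\zeta$ is closed under products), so $\beta$ lifts through $M$ by the universal property of the pullback --- no precover existence theorem is needed. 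One then checks $M\in\CB_\zeta$ by a diagram chase using the injectivity of $Q_0$, takes $\zeta'=\gamma\oplus\zeta:Q_0^{\Lambda}\oplus E\oplus Q_0\to Q_1^{\Lambda}\oplus Q_1$ as the new copresentation (note: not the direct sum of $\zeta$ with an \emph{arbitrary} copresentation of $M$, which would not give $\CB_{\zeta'}=\CB_\zeta$), and verifies $\CB_\zeta\subseteq\CB_\gamma$ by a direct factorization argument. Without the pullback, your argument does not close.
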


\begin{proof}
Let $E$ be an injective cogenerator for $\Modr R$. If $\Lambda=\Hom_R(E,Q_1)$, and $\psi:E\to Q_1^\Lambda$ is the canonical homomorphism,
then we denote by $M$ the pullback of the diagram $Q_0^\Lambda\overset{\zeta^\Lambda}\rightarrow Q_1^\Lambda\overset{\psi}\leftarrow E$.
Then $M$ is the kernel of the map $\gamma:Q_0^\Lambda \oplus E\to Q_1^\Lambda$ induced by $\zeta^\Lambda$ and $\psi$. We consider the
exact sequence $$0\to M\oplus T\to Q_0^\Lambda \oplus E\oplus Q_0\overset{\gamma\oplus\zeta}\rightarrow Q_1^\Lambda\oplus Q_1,$$ and we will
prove that $M\oplus T$ is cosilting with respect to $\gamma\oplus\zeta$.

Note that $\CB_{\gamma\oplus\zeta}=\CB_\gamma\cap\CB_\zeta$. Let  $X$ be a module in $\CB_\zeta$. Then $X\in \CB_{\zeta^\Lambda}$. Since
every homomorphism $\alpha:X\to Q_1^\Lambda$ factorizes as $\alpha=\zeta^\Lambda\beta$, with $\beta:X\to Q_0^\Lambda$, it follows that
$\alpha=\gamma \delta$, where $\delta:X\to E\oplus Q_0^\Lambda$ is the homomorphism induced by $\beta$ and $0$. Therefore,
$\CB_\zeta\subseteq \CB_\gamma$, hence $\CB_{\gamma\oplus \zeta}=\CB_\zeta$, hence $\CB_{\gamma\oplus \zeta}$ is closed under
direct products.

Now, we will prove that $M\in \CB_\zeta$. Let $\alpha:M\to Q_1$ be a homomorphism.
We embed $M$ in the solid rectangle of the following commutative diagram
\[\xymatrix{0 \ar[r] & T^\Lambda\ar[r]^{\upsilon}\ar@{=}[d]\ar@{..>}[ddr]^(.3){\xi} &
M\ar[r]^{\epsilon}\ar[d]^{\mu}\ar@{..>}[ddr]^(0.3){\alpha}
\ar@/_1pc/@{..>}[dd]_(.3){\rho}
& E\ar[d]^{\psi}\ar@/^2pc/@{.>}[dd]^{\lambda} \\
   0 \ar[r] & T^\Lambda\ar[r]\ar@{-->}[d]^{\pi_\lambda} & Q_0^\Lambda\ar[r]^(.7){\zeta^{\Lambda}}\ar@{-->}[d]^{\pi_\lambda} &
	Q_1^\Lambda \ar@{-->}[d]^{\pi_\lambda} \\
	0 \ar[r] & T\ar[r] & Q_0\ar[r]^{\zeta} & Q_1 ,
	}\]
and we observe that $\alpha\upsilon:T^\Lambda\to Q_1$ factorizes through $\zeta$ (since $T^\Lambda\in\CB_\zeta$),
hence $\alpha\upsilon=\zeta\xi$ with $\xi:T^\Lambda\to Q_0$.
Since $Q_0$ is injective there exists a homomorphism $\rho:M\to Q_0$ such that $\xi=\rho\upsilon$. Note that $\zeta\rho-\alpha$ factorize
through $\epsilon$, hence there exists $\lambda:E\to Q_1$ such that $\lambda \epsilon=\zeta\rho-\alpha$. In the above diagram
$\pi_\lambda$ denote the canonical projections onto the $\lambda$-th corresponding components. Since $\pi_\lambda\psi=\lambda$ we obtain
$\zeta\rho-\alpha=\lambda\epsilon=\pi_\lambda\psi\epsilon=\zeta\pi_\lambda\mu$, hence $\alpha$ factorizes through $\zeta$. Then $M\in
\CB_\zeta=\CB_{\gamma\oplus \zeta}$.
It follows that $M\oplus T$ is partial cosilting with respect to $\gamma\oplus\zeta$.

By Theorem \ref{main-cosilting}, in order to complete the proof, it is enough to prove that every homomorphism $\beta:M'\to E$,
with $M'\in\CB_\zeta$, factorize through $\epsilon$. This is true since $\psi\beta$ factorizes through $\zeta^\Lambda$, and $M$ is constructed as a pullback of $\psi$ and $\zeta^\Lambda$.
\end{proof}

The following Corollary is already known, and it can be deduced from \cite[p.93]{An_Co:2002} and the dual of Bongarz Lemma,
\cite[Proposition 6.43]{Gobel_Trlifaj:2006}.

\begin{corollary}
Every partial cotilting module is a direct summand of a cotilting module.
\end{corollary}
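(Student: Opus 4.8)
The plan is to reduce the statement to its cosilting counterpart, Theorem~\ref{pcs-ds}, via the dictionary provided by Example~\ref{si.vs.ti}(a). Let $T$ be a partial cotilting $R$-module. As recalled in Example~\ref{si.vs.ti}(a) this forces $\mathrm{id}(T)\le1$, so I would fix an injective coresolution $0\to T\to Q_0\overset{\zeta}\to Q_1\to0$, and the same example tells us that $T$ is partial cosilting with respect to $\zeta$.

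Next I would apply Theorem~\ref{pcs-ds} to this $\zeta$, keeping the notation of its proof: with $E$ an injective cogenerator of $\Modr R$, $\Lambda=\Hom_R(E,Q_1)$, $\psi:E\to Q_1^\Lambda$ the canonical map and $M=\Ker\bigl(\gamma\colon Q_0^\Lambda\oplus E\to Q_1^\Lambda\bigr)$ the pullback of $\zeta^\Lambda$ and $\psi$, one obtains that $T\oplus M$ is cosilting with respect to $\gamma\oplus\zeta$. Since $T$ is a direct summand of $T\oplus M$, it suffices to prove that $T\oplus M$ is cotilting, and for this I would invoke Example~\ref{si.vs.ti}(a) in the direction ``cosilting $\Rightarrow$ cotilting''. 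To be entitled to do so I need a genuine \emph{short} exact injective copresentation of $T\oplus M$ realizing $\gamma\oplus\zeta$.

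This is the step where the explicit construction must be used. Because $\zeta$ is an epimorphism (it came from a coresolution), $\zeta^\Lambda$ is an epimorphism, hence so are $\gamma$ and therefore $\gamma\oplus\zeta$; thus
$$0\to T\oplus M\to Q_0^\Lambda\oplus E\oplus Q_0\overset{\gamma\oplus\zeta}\longrightarrow Q_1^\Lambda\oplus Q_1\to0$$
is a short exact sequence with injective middle and right terms (in particular $\mathrm{id}(T\oplus M)\le1$). Feeding this sequence together with the module $T\oplus M$ into Example~\ref{si.vs.ti}(a) yields that $T\oplus M$ is cotilting, whence $T$, being a direct summand of it, is a direct summand of a cotilting module. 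Alternatively, as indicated after the statement, one can simply cite \cite[p.~93]{An_Co:2002} together with the dual of the Bongartz Lemma, \cite[Proposition~6.43]{Gobel_Trlifaj:2006}.

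The one delicate point — the ``main obstacle'', modest as it is — is precisely that the cosilting-to-cotilting half of Example~\ref{si.vs.ti}(a) needs a short exact sequence of injectives (so that $\CB_{\gamma\oplus\zeta}$ coincides literally with ${}^{\perp}(T\oplus M)$), which is why one must unwind the pullback description of $M$ from the proof of Theorem~\ref{pcs-ds} rather than treat its statement as a black box; everything else is formal.
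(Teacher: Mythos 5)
Your proposal is correct and follows exactly the paper's own argument: run the construction of Theorem \ref{pcs-ds} with $\zeta$ an epimorphism (available since a partial cotilting module has injective dimension at most $1$), observe that $\gamma\oplus\zeta$ is then also an epimorphism, and conclude via Example \ref{si.vs.ti}(a) that $T\oplus M$ is cotilting. The paper states this in one line; you have merely spelled out the same steps in more detail.
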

\begin{proof}
If in the proof of Theorem \ref{pcs-ds} we assume that $\zeta$ is an epimorphism, then we obtain that $\gamma\oplus\zeta$ is an epimorphism,
so $M\oplus T$ is cotilting by Example \ref{si.vs.ti}.
\end{proof}

\section{Cosilting Modules are Pure Injective}

We recall that a short exact sequence in $\Modr R$ is said to be {\sl pure-exact} if the covariant functor $\Hom_{R}(F,-)$ preserves its exactness for every finitely presented module $F$. An $R$-module $U$ is {\sl pure-injective} if the contravariant functor $\Hom_{R}(-,U)$ preserves the exactness of every pure-exact sequence.

In order to prove the main result of this section, i.e. all cosilting modules are pure-injective, we need the following characterization of pure-injective modules.

\begin{proposition}\label{pure-injective} \cite[Theorem 7.1]{Jensen_Lenzing:1989} An $R$-module $U$ is pure-injective if and only if the contravariant functor $\Hom_{R}(-,U)$ preserves the exactness of the canonical short exact sequence $$0 \to U^{(\lambda)} \longrightarrow U^{\lambda} \longrightarrow U^{\lambda}/U^{(\lambda)} \to 0$$ for every cardinal $\lambda$.
\end{proposition}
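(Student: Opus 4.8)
The plan is to prove the two implications separately, directly from the notions of purity recalled above; the forward one is immediate and all the substance lies in the converse. For the forward implication it suffices to note that the canonical sequence $0\to U^{(\lambda)}\to U^{\lambda}\to U^{\lambda}/U^{(\lambda)}\to 0$ is itself pure-exact, so that the defining property of a pure-injective module applies to it verbatim. To check this purity, let $F$ be finitely presented and $\varphi\colon F\to U^{\lambda}/U^{(\lambda)}$ a homomorphism; choosing generators of $F$ together with finitely many relations among them, one lifts $\varphi$ to a homomorphism $F\to U^{\lambda}$ by taking an arbitrary lift of the images of the generators and correcting it on the finitely many coordinates of $U^{\lambda}$ at which those relations fail. (Equivalently: for every left $R$-module $N$ the map $U^{(\lambda)}\otimes_{R}N\to U^{\lambda}\otimes_{R}N$ is a monomorphism, since it factors the canonical inclusion $\bigoplus_{\lambda}(U\otimes_{R}N)\hookrightarrow\prod_{\lambda}(U\otimes_{R}N)$.) Thus if $U$ is pure-injective then $\Hom_{R}(-,U)$ is exact on this sequence.

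For the converse I would first restate the hypothesis. By left exactness of $\Hom_{R}(-,U)$, exactness of $\Hom_{R}(-,U)$ on $0\to U^{(\lambda)}\to U^{\lambda}\to U^{\lambda}/U^{(\lambda)}\to 0$ says precisely that the restriction $\Hom_{R}(U^{\lambda},U)\to\Hom_{R}(U^{(\lambda)},U)$ is surjective, i.e. that every homomorphism $U^{(\lambda)}\to U$ extends along $U^{(\lambda)}\hookrightarrow U^{\lambda}$; since an arbitrary such homomorphism factors as the summation map precomposed with a product of endomorphisms of $U$, this is in turn equivalent to asking merely that the summation map $U^{(\lambda)}\to U$ extend to $U^{\lambda}\to U$, for every cardinal $\lambda$. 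Next I would reduce pure-injectivity of $U$ to the statement that every pure-exact sequence $0\to U\xrightarrow{\iota}P\to C\to 0$ splits: given a pure-exact $0\to A\xrightarrow{f}B\to C\to 0$ and a homomorphism $\psi\colon A\to U$, the pushout of $f$ along $\psi$ is again pure-exact, of the form $0\to U\to P\to C\to 0$, and comes equipped with a map $B\to P$ over $C$, so that a retraction $P\to U$ composed with $B\to P$ extends $\psi$.

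It remains to split an arbitrary pure-exact $0\to U\xrightarrow{\iota}P\xrightarrow{\pi}C\to 0$ using the reformulated hypothesis, and this is the heart of the matter. I would use that such a sequence is a directed colimit of split short exact sequences: writing $C=\varinjlim_{i\in I}C_{i}$ with the $C_{i}$ finitely presented and forming the pullbacks $P_{i}=P\times_{C}C_{i}$, one gets $P=\varinjlim_{i}P_{i}$ with every $0\to U\to P_{i}\to C_{i}\to 0$ split — the structure map $C_{i}\to C$ lifts through the pure epimorphism $\pi$ because $C_{i}$ is finitely presented — so one may fix retractions $\rho_{i}\colon P_{i}\to U$. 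These $\rho_{i}$ need not be compatible with the transition maps $t_{ij}\colon P_{i}\to P_{j}$; the discrepancies $\rho_{i}-\rho_{j}t_{ij}$ vanish on the copy of $U$ inside $P_{i}$ and hence factor through $P_{i}\to C_{i}$, forming a $1$-cocycle over the direct system $(C_{i})_{i}$, and the assertion amounts to neutralizing this cocycle so that the corrected retractions glue to a single $r\colon P\to U$. This is precisely where the extension property is used: assembling the $\rho_{i}$ into one homomorphism $P\to U^{I}$ (the $i$-th coordinate recording $\rho_{i}$ of a lift of the given element, where such a lift exists) is well defined only modulo the submodule of families that are eventually zero along $I$, but the resulting homomorphism from $P$ into $U^{I}$ modulo that submodule restricts on $\iota(U)$ to a diagonal embedding, which the hypothesis — applied to a suitably chosen index set — splits, yielding the desired retraction. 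I expect this assembling step to be the main obstacle: one must feed the hypothesis the right index set and keep careful track of how the ``eventually zero'' submodule sits between the genuine direct sum and the full product. (An essentially equivalent route runs through the characterization of pure-injective modules as the algebraically compact ones — every finitely solvable system of $R$-linear equations with parameters in $U$ is solvable in $U$ — deducing solvability of the system that encodes a retraction of $0\to U\to P\to C\to 0$ from the extension property; the combinatorial core is the same. Beyond this the argument uses only the two routine facts that direct sums are pure in direct products and that pure-exact sequences are directed colimits of split ones.)
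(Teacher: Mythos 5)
The paper itself offers no proof of this proposition --- it is quoted from \cite[Theorem 7.1]{Jensen_Lenzing:1989} --- so your attempt can only be measured against the known proofs of that classical theorem. Your forward implication is correct and complete: $U^{(\lambda)}$ is pure in $U^{\lambda}$ (either by the lifting argument for finitely presented $F$ or by the tensor criterion), so a pure-injective $U$ makes $\Hom_R(-,U)$ exact on the canonical sequence. The two preliminary reductions are also sound: the hypothesis is equivalent to the extendability of the summation map $U^{(\lambda)}\to U$ to $U^{\lambda}$, and pure-injectivity reduces, via pushout, to the splitting of every pure-exact sequence $0\to U\to P\to C\to 0$.

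The gap is exactly in the step you flag as ``the main obstacle,'' and it is a genuine missing idea rather than bookkeeping. Writing $0\to U\to P\to C\to 0$ as a directed colimit of split sequences $0\to U\to P_i\to C_i\to 0$ with retractions $\rho_i$, the obstruction to gluing is a $1$-cocycle $\delta_{ij}\colon C_i\to U$ (with $\delta_{ik}=\delta_{ij}+\delta_{jk}c_{ij}$) that must be trivialized. Your proposed mechanism --- assemble the $\rho_i$ into a map $P\to U^{I}/Z$, where $Z$ consists of families vanishing on a final segment of $I$, and split the resulting diagonal copy of $U$ --- cannot absorb the hypothesis as stated: the hypothesis splits the summation map on $U^{(I)}$ (finite supports), while $Z$ neither contains nor is contained in $U^{(I)}$, and an extension $\varphi\colon U^{I}\to U$ of the summation map need not factor through $U^{I}/Z$ nor behave in any controlled way on constant tuples. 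When $I$ is countable and linearly ordered the cocycle can in fact be killed using only $\varphi$, by a telescoping trick: set $\epsilon_n=\varphi\circ E_n$ with $E_n(c)=(\delta_{m,m+1}(c_{nm}(c)))_{m\ge n}$ and note that $E_n(c)-E_{n+1}(c_{n,n+1}(c))$ has support $\{n\}$, where $\varphi$ agrees with summation. But for an arbitrary directed $I$ this is precisely the hard content of the theorem. The standard proofs (Zimmermann; Jensen--Lenzing) circumvent it by passing to algebraic compactness and exploiting that pp-definable subgroups $G\le U$ commute with direct products and are preserved by homomorphisms, so that any extension $\varphi$ of the summation map satisfies $\varphi(G^{I})\subseteq G$; this extra structural input (or some reduction of general directed systems to well-ordered chains handled by transfinite telescoping) is what your sketch lacks, and without it the argument does not close.
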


We also need some useful lemmas related to cardinals. Let $I$ be a set and let $\CI$ be a family of subsets of $I$. We say that $\CI$ is {\sl almost disjoint} if the intersection of any two distinct elements of $\CI$ is finite.

\begin{lemma}\label{almost_disjoint} \cite[Lemma 2.3]{Bazzoni:2003} Let $\lambda$ be an infinite cardinal. Then there is a family of $\lambda^{\aleph_{0}}$ countable almost disjoint subsets of $\lambda$.
\end{lemma}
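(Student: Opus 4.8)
The plan is to realize the family inside a cleverly chosen set of size $\lambda$ rather than $\lambda$ itself, using the fact that any two sets of the same cardinality carry the "same" almost-disjoint families. First I would reduce to a convenient ambient set: fix the set $S=\bigcup_{n<\omega}\lambda^{n}$ of all finite sequences from $\lambda$. Since $\lambda$ is infinite, $|S|=\sum_{n<\omega}\lambda^{n}=\lambda$, so it suffices to exhibit $\lambda^{\aleph_{0}}$ countable almost disjoint subsets of $S$; transporting along a bijection $S\to\lambda$ then finishes the statement.

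Next I would produce the family. To each function $f\in{}^{\omega}\lambda$ (i.e. each $\omega$-sequence with values in $\lambda$) associate its set of finite initial segments
\[
A_{f}=\{\,f{\restriction}n : n<\omega\,\}\subseteq S .
\]
Each $A_{f}$ is countably infinite, and there are exactly $|{}^{\omega}\lambda|=\lambda^{\aleph_{0}}$ of them. For almost-disjointness, take $f\neq g$ in ${}^{\omega}\lambda$; let $n_{0}$ be least with $f(n_{0})\neq g(n_{0})$. Then an initial segment $f{\restriction}n$ lies in $A_{g}$ only if $f{\restriction}n = g{\restriction}n$, which forces $n\le n_{0}$. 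Hence $A_{f}\cap A_{g}\subseteq\{\,f{\restriction}n : n\le n_{0}\,\}$ is finite, so the family $\{A_{f}: f\in{}^{\omega}\lambda\}$ is almost disjoint. Finally I would check the count is exact: distinct $f$ give distinct $A_{f}$ (the sets $A_f$ determine $f$ as the unique branch through the tree they generate), so there are precisely $\lambda^{\aleph_{0}}$ members.

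This argument is essentially combinatorial and short; there is no real obstacle. The only point that needs a word of care is the cardinal bookkeeping: $|S|=\lambda$ uses that $\lambda\cdot n=\lambda$ and $\aleph_{0}\cdot\lambda=\lambda$ for infinite $\lambda$, and $|{}^{\omega}\lambda|=\lambda^{\aleph_{0}}$ is the definition of the exponent appearing in the statement. One should also note the family is \emph{nonempty and of the right size} even when $\lambda=\aleph_0$, where $\lambda^{\aleph_0}=2^{\aleph_0}$, so the conclusion is genuinely using that the exponent is $\lambda^{\aleph_0}$ rather than $\lambda$. With the ambient set replaced by $S$ and the family given by branches of the tree ${}^{<\omega}\lambda$, all verifications are immediate, and pulling back along a bijection $S\cong\lambda$ gives the lemma as stated.
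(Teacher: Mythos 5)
Your proof is correct and is exactly the standard argument used in the cited source (Bazzoni's Lemma 2.3): identify $\lambda$ with the tree of finite sequences ${}^{<\omega}\lambda$ and take the branch sets $A_f$ for $f\in{}^{\omega}\lambda$. The paper itself only cites this lemma without proof, so nothing further needs comparing.
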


\begin{lemma}\label{cardinal_equality} \cite[Lemma 3.1]{Griffith:1970} For any cardinal $\gamma$, there is an infinite cardinal $\lambda\geq\gamma$ such that $\lambda^{\aleph_{0}}=2^{\lambda}$. Consequently, for every cardinal $\gamma$, there is an infinite cardinal $\lambda$ such that $\gamma \leq \lambda < \lambda^{\aleph_{0}}$.
\end{lemma}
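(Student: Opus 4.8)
The plan is to prove the first assertion by an explicit recursive construction of $\lambda$ and then to read off the ``consequently'' part directly from Cantor's theorem. Fix a cardinal $\gamma$, put $\lambda_0 = \max(\gamma,\aleph_0)$ and, recursively, $\lambda_{n+1} = 2^{\lambda_n}$ for $n<\omega$. Set $\lambda = \sup_{n<\omega}\lambda_n$. By Cantor's theorem each $\lambda_{n+1}$ is strictly larger than $\lambda_n$, so the sequence is strictly increasing; hence $\lambda$ is an infinite cardinal with $\lambda \geq \lambda_0 \geq \gamma$, and moreover $\lambda = \sum_{n<\omega}\lambda_n$, since $\sum_{n<\omega}\lambda_n \leq \aleph_0\cdot\lambda = \lambda$ while $\sum_{n<\omega}\lambda_n \geq \sup_{n<\omega}\lambda_n = \lambda$.

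The core of the argument is then the following chain of (in)equalities of cardinals:
\begin{align*}
2^{\lambda} = 2^{\sum_{n<\omega}\lambda_n} = \prod_{n<\omega} 2^{\lambda_n} &= \prod_{n<\omega}\lambda_{n+1} \leq \prod_{n<\omega}\lambda \\
&= \lambda^{\aleph_0} \leq (2^{\lambda})^{\aleph_0} = 2^{\lambda}.
\end{align*}
Here the second equality is the standard identity expressing a power of a sum as a product of powers, the third is just reindexing $\lambda_{n+1} = 2^{\lambda_n}$, the first inequality uses $\lambda_{n+1}\leq\lambda$, the fourth equality uses $|\omega| = \aleph_0$, the last inequality uses $\lambda \leq 2^{\lambda}$, and the final equality uses $\lambda\cdot\aleph_0 = \lambda$. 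Since the first and last terms coincide, every inequality in the chain is forced to be an equality; in particular $\lambda^{\aleph_0} = 2^{\lambda}$, which proves the first statement of the lemma.

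For the second statement, apply what has just been shown to the given $\gamma$ to get an infinite cardinal $\lambda \geq \gamma$ with $\lambda^{\aleph_0} = 2^{\lambda}$. Cantor's theorem yields $2^{\lambda} > \lambda$, so $\gamma \leq \lambda < 2^{\lambda} = \lambda^{\aleph_0}$, which is exactly what is required. (Note that $\lambda < \lambda^{\aleph_0}$ is \emph{not} automatic for every infinite $\lambda$ — it can fail when $\mathrm{cf}(\lambda) > \aleph_0$ — which is precisely why the recursive construction, forcing $\mathrm{cf}(\lambda) = \aleph_0$, is needed.)

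There is no real obstacle here: the argument is elementary cardinal arithmetic once the right $\lambda$ has been identified. The only points deserving explicit justification are the identity $2^{\sum_{n<\omega}\lambda_n} = \prod_{n<\omega}2^{\lambda_n}$ and the equality $\sum_{n<\omega}\lambda_n = \lambda$, both routine; an alternative but essentially equivalent presentation would invoke $\mathrm{cf}(\lambda) = \aleph_0$ together with König's theorem instead of the product computation above.
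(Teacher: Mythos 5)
Your proof is correct; the paper gives no argument of its own here, simply citing Griffith's Lemma 3.1, and your recursive construction $\lambda_{n+1}=2^{\lambda_n}$, $\lambda=\sup_n\lambda_n$ together with the chain $2^{\lambda}=\prod_n 2^{\lambda_n}\leq\lambda^{\aleph_0}\leq 2^{\lambda}$ is exactly the classical argument behind that citation. The cofinality remark at the end is a nice touch, correctly explaining why the construction is needed rather than an arbitrary $\lambda\geq\gamma$.
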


We have the following construction given by Bazzoni in the proof of \cite[Proposition 2.5]{Bazzoni:2003}. We present the
details for reader's convenience.

\begin{proposition}\label{Bazzoni} Let $T$ be an $R$-module and let $\lambda$ be an infinite cardinal. Then there is a submodule $T^{(\lambda)} \leq V \leq T^{\lambda}$ such that $V/T^{(\lambda)} \cong X^{(\lambda^{\aleph_{0}})}$, where $X = T^{\aleph_{0}}/T^{(\aleph_{0})}$.
\end{proposition}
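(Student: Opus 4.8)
The goal is to construct, for an infinite cardinal $\lambda$, a submodule $T^{(\lambda)} \leq V \leq T^{\lambda}$ with $V/T^{(\lambda)} \cong X^{(\lambda^{\aleph_0})}$ where $X = T^{\aleph_0}/T^{(\aleph_0)}$. The plan is to use an almost disjoint family of countable subsets of $\lambda$ to carve out $V$ inside $T^\lambda$ as a sum of "diagonal-like" copies of $T^{\aleph_0}$ sitting over the various coordinate blocks.

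First I would invoke Lemma \ref{almost_disjoint} to fix a family $\CI = \{I_\mu \mid \mu \in \lambda^{\aleph_0}\}$ of countable almost disjoint subsets of $\lambda$. For each $\mu$, the coordinate projection $T^\lambda \to T^{I_\mu}$ (onto the coordinates indexed by $I_\mu$, a countable set) lets me regard $T^{I_\mu} \cong T^{\aleph_0}$ as embedded in $T^\lambda$ by extending by zero outside $I_\mu$; call its image $W_\mu$. Then I would set $V = T^{(\lambda)} + \sum_{\mu} W_\mu$ inside $T^\lambda$. By construction $T^{(\lambda)} \leq V \leq T^\lambda$, so the inclusions are immediate; the content is in computing the quotient $V/T^{(\lambda)}$.

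For the quotient, I would argue that $V/T^{(\lambda)} \cong \bigoplus_\mu \big( W_\mu/(W_\mu \cap T^{(\lambda)}) \big)$, i.e. that the images of the $W_\mu$ in $V/T^{(\lambda)}$ form an internal direct sum. The key point is almost disjointness: if an element of $W_\mu$ and an element of $W_\nu$ ($\mu \neq \nu$) agree modulo $T^{(\lambda)}$, then outside the finite set $I_\mu \cap I_\nu$ their supports are disjoint, so each has support contained in $(I_\mu \cap I_\nu) \cup (\text{finite})$ modulo $T^{(\lambda)}$, forcing both images to lie in the finitely-supported part — hence to be zero in the quotient; a careful version of this bookkeeping gives both the directness of the sum and that only finitely many summands are nonzero in any given element of $V$ (because any single element of $T^\lambda$ that is a finite sum of elements of the $W_\mu$'s plus something in $T^{(\lambda)}$ can be analyzed coordinatewise). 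Finally, $W_\mu \cap T^{(\lambda)}$ is exactly the finitely-supported elements of $W_\mu \cong T^{I_\mu} \cong T^{\aleph_0}$, namely $T^{(\aleph_0)}$, so each summand is $T^{\aleph_0}/T^{(\aleph_0)} = X$, and there are $|\CI| = \lambda^{\aleph_0}$ of them, giving $V/T^{(\lambda)} \cong X^{(\lambda^{\aleph_0})}$.

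The main obstacle I expect is the directness-of-the-sum verification: showing rigorously that $\sum_\mu W_\mu$, read modulo $T^{(\lambda)}$, is a genuine direct sum with the summands as claimed. This requires carefully tracking supports and using that a finite sum of countable sets from an almost disjoint family, after removing the pairwise (finite) intersections, splits into pieces with essentially disjoint supports; the bookkeeping is elementary but must be done with some care to avoid off-by-a-finite-set errors. Everything else — the two inclusions, the identification of each summand with $X$, and the cardinality count — is routine once this is in place.
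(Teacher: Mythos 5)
Your proposal is correct and follows essentially the same route as the paper: fix a family of $\lambda^{\aleph_{0}}$ countable almost disjoint subsets of $\lambda$ via Lemma \ref{almost_disjoint}, embed each $T^{I_{\mu}}$ into $T^{\lambda}$, observe that its image in $T^{\lambda}/T^{(\lambda)}$ is a copy of $X$, and use almost disjointness to see that these images form a direct sum, then pull back along the projection to get $V$. Your support-tracking sketch of the directness is the right argument (the paper asserts this step without detail), so nothing essential is missing.
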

\begin{proof} By Lemma \ref{almost_disjoint}, there is a family $\CI = \{I_{\alpha} \} _{\alpha \leq \lambda^{\aleph_{0}}}$ of $\lambda^{\aleph_{0}}$ countable almost disjoint subsets of
$\lambda$. We view $T^{I_{\alpha}}$ as embedded in $T^{\lambda}$ and we consider $p_{\alpha} : T^{I_{\alpha}} \to T^{\lambda}/T^{(\lambda)}$ be the restriction to $T^{I_{\alpha}}$ of the canonical projection $p : T^{\lambda} \to T^{\lambda}/T^{(\lambda)}$, for all $\alpha \leq \lambda^{\aleph_{0}}$. We note that $\Ker(p_{\alpha}) = T^{(I_{\alpha})}$ and $\Ima(p_{\alpha}) \cong X$, for all $\alpha \leq \lambda^{\aleph_{0}}$. Moreover, the sum $\sum_{\alpha \leq \lambda^{\aleph_{0}}} \Ima(p_{\alpha})$ in $T^{\lambda}/T^{(\lambda)}$ is actually a direct sum.

Since all $\Ima(p_{\alpha})$ are submodules of $T^{\lambda}/T^{(\lambda)}$, it follows that $\bigoplus_{\alpha \leq \lambda^{\aleph_{0}}} \Ima(p_{\alpha}) \cong X^{(\lambda^{\aleph_{0}})}$ is also a submodule of $T^{\lambda}/T^{(\lambda)}$, hence there is a submodule $T^{(\lambda)} \leq V \leq T^{\lambda}$ such that $V/T^{(\lambda)} \cong X^{(\lambda^{\aleph_{0}})}$.
\end{proof}

\begin{proposition}\label{countable_exactness} Let $T$ be a partial cosilting $R$-module
with respect to the injective copresentation $\zeta : Q_{0} \to Q_{1}$. Then $T^{\aleph_{0}}/T^{(\aleph_{0})} \in \CB_\zeta$.
%In other words, the contravariant functor $\Hom_{R}(-,T)$ preserves the exactness of the canonical short exact sequences $$0 \to T^{(\aleph_{0})} \longrightarrow T^{\aleph_{0}} \longrightarrow T^{\aleph_{0}}/T^{(\aleph_{0})} \to 0$$
\end{proposition}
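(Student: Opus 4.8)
The plan is to show that $X:=T^{\aleph_0}/T^{(\aleph_0)}$ lies in ${^{\perp}T}$ and then to invoke Lemma \ref{closure_prop}(5). Since $T$ is partial cosilting with respect to $\zeta$ we have $T\in\CB_\zeta$, and $\CB_\zeta$ is closed under direct sums (Lemma \ref{closure_prop}(1)) and, by hypothesis, under direct products; hence both $T^{(\aleph_0)}$ and $T^{\aleph_0}$ lie in $\CB_\zeta$. Because $Q_0$ is injective we have $\CB_\zeta\subseteq{^{\perp}T}$ (Lemma \ref{closure_prop}(4)), and because $Q_1$ is injective $\CB_\zeta$ is closed under submodules (Lemma \ref{closure_prop}(2)). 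Thus, once $X\in{^{\perp}T}$ is established, Lemma \ref{closure_prop}(5) applied to the canonical exact sequence $0\to T^{(\aleph_0)}\to T^{\aleph_0}\to X\to 0$ yields $X\in\CB_\zeta$.

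It remains to prove $\Ext^1_R(X,T)=0$, and here I would argue by contradiction, inflating a hypothetical nonzero $\Ext$-group via Proposition \ref{Bazzoni} until it collides with a cardinality bound. Assume $T\neq 0$ and $\Ext^1_R(X,T)\neq 0$, put $S=\End_R(T)$, and use Lemma \ref{cardinal_equality} to fix an infinite cardinal $\lambda\geq|S|$ with $\lambda^{\aleph_0}=2^{\lambda}$. Proposition \ref{Bazzoni} provides a submodule $T^{(\lambda)}\leq V\leq T^{\lambda}$ with $V/T^{(\lambda)}\cong X^{(\lambda^{\aleph_0})}$. Since $V$ is a submodule of the product $T^{\lambda}$, which lies in $\CB_\zeta$, we get $V\in\CB_\zeta\subseteq{^{\perp}T}$, so $\Ext^1_R(V,T)=0$. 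Applying $\Hom_R(-,T)$ to $0\to T^{(\lambda)}\to V\to X^{(\lambda^{\aleph_0})}\to 0$ then produces a surjection $\Hom_R(T^{(\lambda)},T)\twoheadrightarrow\Ext^1_R(X^{(\lambda^{\aleph_0})},T)$.

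Now the cardinalities clash. The domain $\Hom_R(T^{(\lambda)},T)\cong S^{\lambda}$ has cardinality at most $|S|^{\lambda}\leq\lambda^{\lambda}=2^{\lambda}$, whereas the codomain $\Ext^1_R(X^{(\lambda^{\aleph_0})},T)\cong\Ext^1_R(X,T)^{\lambda^{\aleph_0}}$ has at least $2^{\lambda^{\aleph_0}}=2^{2^{\lambda}}$ elements, because $\Ext^1_R(X,T)\neq 0$. Since $2^{2^{\lambda}}>2^{\lambda}$ by Cantor's theorem, no such surjection exists; this contradiction forces $\Ext^1_R(X,T)=0$, and the proof concludes as in the first paragraph. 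The only step requiring a real idea is this inflation argument: a direct attempt to extend homomorphisms $T^{(\aleph_0)}\to T$ to $T^{\aleph_0}$ would essentially presuppose pure-injectivity of $T$, whereas here one uses that $\CB_\zeta$ is closed under \emph{both} products and submodules — so that every submodule of $T^{\lambda}$, and in particular $V$, lands in ${^{\perp}T}$ — combined with the combinatorial construction of Proposition \ref{Bazzoni}, precisely as in Bazzoni's argument for cotilting modules.
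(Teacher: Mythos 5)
Your proof is correct and follows the paper's argument (Bazzoni's cardinality method) in all essentials: the reduction to showing $T^{\aleph_{0}}/T^{(\aleph_{0})}\in{^{\perp}T}$ and then invoking Lemma \ref{closure_prop}(5), the choice of an infinite $\lambda\geq|\Hom_{R}(T,T)|$ with $\lambda^{\aleph_{0}}=2^{\lambda}$, the module $V$ from Proposition \ref{Bazzoni}, and the observation that $V\in\CB_{\zeta}\subseteq{^{\perp}T}$ because $\CB_{\zeta}$ is closed under both products and submodules. Your one small streamlining --- reading the surjection $\Hom_{R}(T^{(\lambda)},T)\twoheadrightarrow\Ext^{1}_{R}(V/T^{(\lambda)},T)$ directly off the sequence $0\to T^{(\lambda)}\to V\to V/T^{(\lambda)}\to 0$ using $\Ext^{1}_{R}(V,T)=0$ --- lets you bypass the pushout diagram and the intermediate bound on $|\Ext^{1}_{R}(T^{\lambda}/T^{(\lambda)},T)|$ that the paper uses, but the resulting cardinality clash $2^{\lambda}\geq 2^{2^{\lambda}}$ is identical.
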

\begin{proof} Suppose that $T\neq 0$ is partial cosilting with respect to the injective copresentation $$0 \to T \overset{f}\longrightarrow Q_{0} \overset{\zeta}\longrightarrow Q_{1}.$$ By definition, $T$ belongs to $\CB_{\zeta}$, hence
for every cardinal $\gamma$ we have $T^{(\gamma)}\in \CB_\zeta$ (by Lemma \ref{closure_prop}) and $T^{\gamma}\in\CB_{\zeta}$. By Lemma \ref{inclusions}, we obtain $$\Ext^{1}_{R}(T^{\gamma},T) = 0 = \Ext^{1}_{R}(T^{(\gamma)},T).$$

By Lemma \ref{last_term} it is enough to prove that $T^{\aleph_{0}}/T^{(\aleph_{0})} \in {^{\perp}T}$.
We denote by $X$ the $R$-module $T^{\aleph_{0}}/T^{(\aleph_{0})}$.

Using Lemma
\ref{cardinal_equality} we can fix an infinite cardinal $\lambda \geq  \left|
\Hom_{R}(T,T) \right|$ such that $\lambda^{\aleph_{0}} = 2^{\lambda}$.
We note that from $\Hom_{R}(T,T) \neq 0$ we have $\lambda \geq \left| \Hom_{R}(T,T) \right| \geq 2$, and it follows that $\left| \Hom_{R}(T,T) \right|^{\lambda} = 2^{\lambda}$, since
$2^\lambda\leq \left| \Hom_{R}(T,T) \right|^{\lambda}\leq \lambda^\lambda = \lambda^{\aleph_0\lambda} = 2^{\lambda\lambda} = 2^\lambda$.

Applying $\Hom_{R}(-,T)$ to the canonical short exact sequence $$0 \to
T^{(\lambda)} \overset{i}\longrightarrow T^{\lambda}
\overset{p}\longrightarrow T^{\lambda}/T^{(\lambda)} \to 0,$$we obtain the inequality $$\left| \Hom_{R}(T,T) \right|^{\lambda} \geq \left| \Ext^{1}_{R}(T^{\lambda}/T^{(\lambda)},T) \right|.$$

By Proposition \ref{Bazzoni}, there is a submodule $T^{(\lambda)} \leq V \leq T^{\lambda}$ such that $V/T^{(\lambda)} \cong X^{(\lambda^{\aleph_{0}})}$.
%Then we consider the following short exact sequences $$0 \to T^{(\lambda)} \overset{r}\longrightarrow V \overset{s}\longrightarrow V/T^{(\lambda)} \to 0,$$ $$0 \to V \overset{u}\longrightarrow T^{\lambda}\overset{v}\longrightarrow T^{\lambda}/V \to 0$$ and $$0 \to V/T^{(\lambda)} \overset{j}\longrightarrow T^{\lambda}/T^{(\lambda)} \overset{q}\longrightarrow T^{\lambda}/V \to 0.$$
Since $\CB_{\zeta}$ is closed under submodules (cf. Lemma \ref{closure_prop}), we observe that $V$ belongs to $\CB_{\zeta}$ hence $\Ext^{1}_{R}(V,T) = 0$. Applying the $\Hom_{R}(-,T)$ functor to the following pushout commutative diagram
$$\begin{CD}
@. 0 @. 0\\
@. @VVV @VVV\\
@. T^{(\lambda)} @= T^{(\lambda)}\\
@. @VrVV @VViV\\
0 @>>> V @>u>> T^{\lambda} @>v>> T^{\lambda}/V @>>> 0\\
@. @VsVV @VVpV @|\\
0 @>>> V/T^{(\lambda)} @>j>> T^{\lambda}/T^{(\lambda)} @>q>> T^{\lambda}/V @>>> 0\\
@. @VVV @VVV\\
@. 0 @. 0\\
\end{CD}$$
we obtain the commutative diagram
$$\begin{CD}
@. 0 @>>>\Ext^{2}_{R}(T^{\lambda}/V,T)\\
@. @AAA @|\\
\Ext^{1}_{R}(T^{\lambda}/T^{(\lambda)},T) @>>> \Ext^{1}_{R}(V/T^{(\lambda)},T) @>>> \Ext^{2}_{R}(T^{\lambda}/V,T).\\
\end{CD}$$
It follows that the natural homomorphism $$\Ext^{1}_{R}(T^{\lambda}/T^{(\lambda)},T) \longrightarrow \Ext^{1}_{R}(V/T^{(\lambda)},T)$$ is an epimorphism,
hence we have the inequality between cardinals $$\left| \Ext^{1}_{R}(T^{\lambda}/T^{(\lambda)},T) \right| \geq \left| \Ext^{1}_{R}(V/T^{(\lambda)},T) \right| = \left| \Ext^{1}_{R}(X,T) \right| ^{\lambda^{\aleph_{0}}}.$$

Now, if we assume that $\Ext^{1}_{R}(X,T) \neq 0$, then $\left| \Ext^{1}_{R}(X,T) \right| \geq 2$, and we obtain
$$2^{\lambda} = \left| \Hom_{R}(T,T) \right|^{\lambda} \geq \left| \Ext^{1}_{R}(X,T) \right|^{\lambda^{\aleph_{0}}} \geq 2^{\lambda^{\aleph_{0}}}=2^{2^\lambda}, $$ which is a contradiction. It follows that $\Ext^{1}_{R}(X,T) = 0$, hence $T^{\aleph_{0}}/T^{(\aleph_{0})} \in {^{\perp}T}$, and the proof
is complete.
%
%Therefore the induced sequence $$0 \to \Hom_{R}(T^{\aleph_{0}}/T^{(\aleph_{0})},T) \overset{p^{0}_{T}}\longrightarrow \Hom_{R}(T^{\aleph_{0}},T) \overset{i^{0}_{T}}\longrightarrow \Hom_{R}(T^{(\aleph_{0})},T) \to 0$$is exact. Then the conclusion follows from Lemma \ref{last_term}.
\end{proof}

We have the following result which holds in general.
\begin{proposition}\label{co_gen_eral} \cite[Lemma 2.7]{Bazzoni:2003} Let $X$ and $U$ be two $R$-modules. If $X^{\aleph_{0}}/X^{(\aleph_{0})} \in \Cogen(U)$ then $X^{\lambda}/X^{(\lambda)} \in \Cogen(U)$, for every cardinal $\lambda$.
\end{proposition}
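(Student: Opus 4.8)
The plan is to reduce the general cardinal $\lambda$ to the countable case by exhibiting $X^{\lambda}/X^{(\lambda)}$ as being cogenerated by copies of the known-good module $X^{\aleph_0}/X^{(\aleph_0)}$. First I would recall that $\Cogen(U)$ is closed under submodules and direct products, so once we know $X^{\aleph_0}/X^{(\aleph_0)}\in\Cogen(U)$, any module embeddable in a product of copies of $X^{\aleph_0}/X^{(\aleph_0)}$ automatically lies in $\Cogen(U)$. Thus it suffices to construct a monomorphism from $X^{\lambda}/X^{(\lambda)}$ into a product of copies of $X^{\aleph_0}/X^{(\aleph_0)}$.

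The key combinatorial device is to cover $\lambda$ by countable subsets. Fix a family $\{J_s\}_{s\in \CS}$ of countable subsets of $\lambda$ whose union is all of $\lambda$ (for instance, all countable subsets, or just enough of them to cover). For each $s$ let $\pi_s\colon X^{\lambda}\to X^{J_s}$ be the projection; it carries $X^{(\lambda)}$ into $X^{(J_s)}$, so it induces a map $\bar\pi_s\colon X^{\lambda}/X^{(\lambda)}\to X^{J_s}/X^{(J_s)}\cong X^{\aleph_0}/X^{(\aleph_0)}$. The product map $\prod_s \bar\pi_s\colon X^{\lambda}/X^{(\lambda)}\to \prod_{s\in\CS}\bigl(X^{J_s}/X^{(J_s)}\bigr)$ is the candidate embedding; I would check injectivity by a direct support argument: if $(x_i)_{i\in\lambda}+X^{(\lambda)}$ maps to zero, then for every countable $s$ the restricted family $(x_i)_{i\in s}$ has finite support, and since the $J_s$ cover $\lambda$, the whole family $(x_i)_{i\in\lambda}$ has finite support, i.e. lies in $X^{(\lambda)}$.

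Once the monomorphism $X^{\lambda}/X^{(\lambda)}\hookrightarrow \prod_{s\in\CS}\bigl(X^{\aleph_0}/X^{(\aleph_0)}\bigr)$ is in hand, the conclusion is immediate: by hypothesis $X^{\aleph_0}/X^{(\aleph_0)}\in\Cogen(U)$, so a product of copies of it is again in $\Cogen(U)$, and a submodule of such a product is in $\Cogen(U)$. Hence $X^{\lambda}/X^{(\lambda)}\in\Cogen(U)$ for every cardinal $\lambda$. The only genuinely delicate point is the injectivity of the product map, which hinges on the elementary fact that a function on $\lambda$ whose restriction to every member of a covering family of countable sets has finite support must itself have finite support; everything else is formal closure bookkeeping for $\Cogen(U)$.
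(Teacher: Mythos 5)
Your overall strategy is the right one, and it is in fact essentially the original argument: the paper gives no proof of this proposition at all, citing \cite[Lemma 2.7]{Bazzoni:2003} instead, and Bazzoni's proof proceeds exactly as you propose, by separating the points of $X^{\lambda}/X^{(\lambda)}$ via the maps induced by the projections onto $X^{J}/X^{(J)}$ for countable $J\subseteq\lambda$, and then invoking closure of $\Cogen(U)$ under direct products and submodules.

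There is, however, one genuine error in how you justify the key injectivity step. You allow the family $\{J_s\}_{s\in\CS}$ to be ``just enough of them to cover'' $\lambda$, and you summarize the crucial combinatorial fact as: a function on $\lambda$ whose restriction to every member of a covering family of countable sets has finite support must itself have finite support. That statement is false for an arbitrary covering family. For instance, partition $\lambda=\aleph_1$ into $\aleph_1$ countably infinite blocks $J_s$, and let $(x_i)_{i\in\lambda}$ be supported on a countably infinite set meeting each block in at most one point; then every restriction $(x_i)_{i\in J_s}$ has support of size at most one, yet $(x_i)\notin X^{(\lambda)}$, so for that family your product map is not injective. What you actually need is a family $\CS$ with the property that every infinite subset of $\lambda$ has infinite intersection with some $J_s$. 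Your first suggestion, the family of \emph{all} countably infinite subsets of $\lambda$, does have this property: an infinite support contains a countably infinite subset $J$, and the restriction to that $J$ has infinite support, hence nonzero image in $X^{J}/X^{(J)}$. With that choice the injectivity argument, and with it the whole proof, goes through; simply delete the claim that any covering family suffices.
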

%\begin{proof} See \cite[Lemma 2.7]{Bazzoni:2003}
%\end{proof}

%\begin{proposition}\label{factor_in_class} If $T$ is a partial cosilting $R$-module with respect to the injective copresentation $\zeta$, then $T^{\aleph_{0}}/T^{(\aleph_{0})}\in\CB_{\zeta}$.
%\end{proposition}
%\begin{proof} Suppose that $T$ is a partial cosilting $R$-module with respect to injective copresentation $\zeta$. We have the canonical short exact sequence $$0 \to T^{(\aleph_{0})} \longrightarrow T^{\aleph_{0}} \longrightarrow T^{\aleph_{0}}/T^{(\aleph_{0})} \to 0$$ with $T^{(\aleph_{0})}, T^{\aleph_{0}} \in \CB_{\zeta}$. By Theorem \ref{countable_exactness}, the factor $T^{\aleph_{0}}/T^{(\aleph_{0})}$ belongs to the class ${^{\perp}T}$. Then the conclusion follows from Lemma \ref{last_term}.
%\end{proof}

\begin{theorem}\label{cosi-pi} Let $T$ be an $R$-module. If $T$ is partial cosilting then $T$ is pure-injective.
\end{theorem}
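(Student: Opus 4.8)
The plan is to deduce pure-injectivity from Proposition \ref{pure-injective}, so it suffices to show that for every cardinal $\lambda$ the functor $\Hom_R(-,T)$ keeps the canonical sequence
$$0\to T^{(\lambda)}\to T^\lambda\to T^\lambda/T^{(\lambda)}\to 0$$
exact; equivalently, that the connecting map $\Ext^1_R(T^\lambda/T^{(\lambda)},T)\to \Ext^1_R(T^\lambda,T)$ has a trivial obstruction, which here reduces to showing $\Ext^1_R(T^\lambda/T^{(\lambda)},T)=0$. (Note $\Ext^1_R(T^\lambda,T)=0$ already, by Lemma \ref{inclusions} together with Lemma \ref{closure_prop}, since $T^\lambda\in\CB_\zeta$.)

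First I would invoke Proposition \ref{countable_exactness}, which tells us $X:=T^{\aleph_0}/T^{(\aleph_0)}\in\CB_\zeta$; since $\CB_\zeta\subseteq\Cogen(T)$ is false in general, but $\CB_\zeta\subseteq{}^\perp T$ is exactly Lemma \ref{inclusions}, we get $\Ext^1_R(X,T)=0$ for free — but actually the key point is that $X\in\CB_\zeta$, hence $X\in\Cogen(T)$ is \emph{not} what we have; rather we only have $X\in{}^\perp T$. Wait — more carefully: from Proposition \ref{countable_exactness} we have $X\in\CB_\zeta$, and from Lemma \ref{inclusions} $\CB_\zeta\subseteq{}^\perp T$, so $\Ext^1_R(X,T)=0$, i.e. $\Hom_R(-,T)$ is exact on $0\to T^{(\aleph_0)}\to T^{\aleph_0}\to X\to 0$. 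Then I would upgrade from $\aleph_0$ to arbitrary $\lambda$ using the same Bazzoni-style argument already packaged in Proposition \ref{countable_exactness}'s proof: the module $T^\lambda/T^{(\lambda)}$ receives an epimorphism onto $\Ext^1_R(T^\lambda/T^{(\lambda)},T)$ whose size is bounded below by $|\Ext^1_R(X,T)|^{\lambda^{\aleph_0}}$ via the submodule $V$ from Proposition \ref{Bazzoni}; since $\Ext^1_R(X,T)=0$ this forces $\Ext^1_R(T^\lambda/T^{(\lambda)},T)$ to be zero directly, because the submodule $V$ with $V/T^{(\lambda)}\cong X^{(\lambda^{\aleph_0})}$ lies in $\CB_\zeta$ (closed under submodules, Lemma \ref{closure_prop}(2)), so $\Ext^1_R(V,T)=0$, and the pushout diagram in the proof of Proposition \ref{countable_exactness} shows $\Ext^1_R(T^\lambda/T^{(\lambda)},T)\twoheadrightarrow \Ext^1_R(V/T^{(\lambda)},T)=\Ext^1_R(X,T)^{(\lambda^{\aleph_0})}=0$. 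That only gives surjectivity onto a zero group, not vanishing of the source, so this needs a little more.

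The cleaner route, and the one I would actually write, is: apply Proposition \ref{co_gen_eral} with $U=T$ (or rather, rerun its proof with $\CB_\zeta$ in place of $\Cogen(U)$, which is legitimate since $\CB_\zeta$ is closed under submodules and direct products by Lemma \ref{closure_prop} and the partial cosilting hypothesis). From $X=T^{\aleph_0}/T^{(\aleph_0)}\in\CB_\zeta$ (Proposition \ref{countable_exactness}) we conclude $T^\lambda/T^{(\lambda)}\in\CB_\zeta$ for every cardinal $\lambda$, exactly as in Bazzoni's Lemma 2.7: $T^\lambda/T^{(\lambda)}$ embeds into a product of copies of $X$ (using almost disjoint families and the fact that any element of $T^\lambda$ has support of size $<\lambda$ contained in a countable union along the almost disjoint family), and $\CB_\zeta$ is closed under products and submodules. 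Once $T^\lambda/T^{(\lambda)}\in\CB_\zeta\subseteq{}^\perp T$, we get $\Ext^1_R(T^\lambda/T^{(\lambda)},T)=0$, hence $\Hom_R(-,T)$ is exact on the canonical sequence, hence $T$ is pure-injective by Proposition \ref{pure-injective}. The main obstacle is the combinatorial bookkeeping in transferring the embedding $T^\lambda/T^{(\lambda)}\hookrightarrow X^{\text{(something)}}$ from $\aleph_0$ to general $\lambda$ — but this is precisely the content of the cited \cite[Lemma 2.7]{Bazzoni:2003} / Proposition \ref{co_gen_eral}, whose proof depends only on closure under submodules and products, so it applies verbatim to $\CB_\zeta$, and no genuinely new difficulty arises.
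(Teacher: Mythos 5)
Your final argument is correct, but it takes a genuinely different route from the paper's. The paper first invokes Theorem \ref{pcs-ds} to reduce to the case of a cosilting module $T$, for which $\CB_\zeta=\Cogen(T)$; it can then quote Proposition \ref{co_gen_eral} literally with $U=T$ to pass from $T^{\aleph_0}/T^{(\aleph_0)}\in\Cogen(T)$ to $T^\lambda/T^{(\lambda)}\in\Cogen(T)\subseteq{}^{\perp}T$. You instead stay with the partial cosilting module itself and rerun Bazzoni's argument with $\CB_\zeta$ in place of $\Cogen(T)$: the embedding of $T^\lambda/T^{(\lambda)}$ into a direct product of copies of $T^{\aleph_0}/T^{(\aleph_0)}$ (indexed by the countable subsets of $\lambda$), together with closure of $\CB_\zeta$ under direct products (condition (b) of the definition of partial cosilting) and under submodules (Lemma \ref{closure_prop}(2), $Q_1$ being injective), yields $T^\lambda/T^{(\lambda)}\in\CB_\zeta\subseteq{}^{\perp}T$ by Lemma \ref{inclusions}, and Proposition \ref{pure-injective} finishes. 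What your route buys is independence from Theorem \ref{pcs-ds} (the complement theorem), so the proof for partial cosilting modules is direct rather than via a direct-summand reduction; what it costs is that you must open up the proof of the cited Proposition \ref{co_gen_eral} and verify that it uses nothing beyond closure under products and submodules --- which is indeed the case. Your first paragraph's cardinality detour is a false start, but you correctly diagnose why it fails (an epimorphism onto a zero group says nothing about the source) and discard it, so it does not affect the validity of the final argument.
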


\begin{proof} By Theorem \ref{pcs-ds} it is enough to prove that every cosilting module is pure-injective.
In order to obtain this we will prove that for every cardinal $\lambda$ then functor $\Hom_{R}(-,T)$ preserves the exactness of the canonical short exact sequence $$0 \to T^{(\lambda)} \longrightarrow T^{\lambda} \longrightarrow T^{\lambda}/T^{(\lambda)} \to 0.$$

Suppose that $T$ is cosilting with respect to $\zeta$. By Proposition \ref{countable_exactness}, we have $T^{\aleph_{0}}/T^{(\aleph_{0})}\in\CB_{\zeta}$. Moreover, $T^{\aleph_{0}}/T^{(\aleph_{0})}\in\CB_\zeta=\Cogen(T)$, and applying Proposition \ref{co_gen_eral} we obtain
$T^{\lambda}/T^{(\lambda)} \in \Cogen(T)$ for every cardinal $\lambda$. It follows that $T^{\lambda}/T^{(\lambda)} \in {^{\perp}T}$ for every cardinal $\lambda$, and it follows that $T$ is pure-injective.
\end{proof}

We recall that a full subcategory (or a subclass) of $\Modr R$ is a {\sl definable subcategory} if it is closed in $\Modr R$ under direct products, direct limits and pure submodules.

\begin{corollary}\label{definable}
If $T$ is a cosilting $R$-module with respect to $\zeta$, then the class $\Cogen(T)=\CB_{\zeta}$ is definable.
\end{corollary}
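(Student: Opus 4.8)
The plan is to show that $\Cogen(T) = \CB_\zeta$ satisfies the three closure properties in the definition of a definable subcategory: closure under direct products, direct limits, and pure submodules. Two of these come essentially for free. Closure under direct products is part of the definition of a partial cosilting module (condition (b)), and it also follows since $\Cogen(T)$ is always closed under direct products. Closure under pure submodules: if $0 \to A \to B \to C \to 0$ is pure-exact with $B \in \CB_\zeta$, then since $\CB_\zeta$ is closed under submodules (Lemma \ref{closure_prop}(2), as $Q_1$ is injective), we get $A \in \CB_\zeta$ immediately — so in fact $\CB_\zeta$ is closed under all submodules, not just pure ones.

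The real content is closure under direct limits. Here I would use the fact, just established in Theorem \ref{cosi-pi}, that $T$ is pure-injective. The point is the following standard fact: if $U$ is a pure-injective module, then the class $\Cogen(U)$ (equivalently here $^{\perp_{}}T$-type conditions combined with cogeneration) is closed under direct limits. More precisely, I would argue as follows. A direct limit $\varinjlim M_i$ of modules $M_i$ fits into a pure-exact sequence $0 \to K \to \bigoplus_i M_i \to \varinjlim M_i \to 0$ (the canonical presentation of a direct limit is pure). If each $M_i \in \CB_\zeta = \Cogen(T)$, then $\bigoplus_i M_i \in \CB_\zeta$ by Lemma \ref{closure_prop}(1). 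Applying $\Hom_R(-,T)$ to this pure-exact sequence and using that $T$ is pure-injective, we get that $0 \to \Hom_R(\varinjlim M_i, T) \to \Hom_R(\bigoplus_i M_i, T) \to \Hom_R(K,T) \to 0$ is exact; in particular the map $\varinjlim M_i \to \Hom_S(\Hom_R(\varinjlim M_i,T),T)$ can be analyzed, but more directly: since $\bigoplus M_i \in \Cogen(T)$ and $K$ is a submodule (hence $K \in \Cogen(T) = \CB_\zeta$, so $\Ext^1_R(K,T)=0$ by Lemma \ref{inclusions}), the sequence of $\Delta = \Hom_R(-,T)$ applied is exact, and one checks that $\delta_{\varinjlim M_i}$ is a monomorphism via a diagram chase comparing with $\delta_{\bigoplus M_i}$, which is a monomorphism. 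This gives $\varinjlim M_i \in \Cogen(T) = \CB_\zeta$.

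The step I expect to be the main obstacle is verifying closure under direct limits cleanly — specifically, making the diagram chase with the biduality maps $\delta$ work, since one must simultaneously use purity of the canonical presentation of the direct limit, pure-injectivity of $T$ (to get exactness after applying $\Hom_R(-,T)$), the vanishing $\Ext^1_R(K,T) = 0$ (to get exactness after applying $\Hom_R(-,T)$ a second time to the relevant sequence of $S$-modules), and the submodule-closure of $\Cogen(T)$. An alternative, perhaps cleaner route: invoke the general theorem that for a pure-injective module $U$, the class $\Cogen(U) \cap {^{\perp}U}$ is definable whenever it is a torsion-free class — but since Corollary \ref{torsion_pair} already tells us $\Cogen(T)$ is a torsion-free class and Lemma \ref{inclusions} gives $\Cogen(T) \subseteq {^{\perp}T}$, the hypotheses are in place, and the only genuinely new input beyond the torsion-pair structure is pure-injectivity of $T$, which Theorem \ref{cosi-pi} supplies. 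I would phrase the proof to lean on this, citing the pure-injectivity and the torsion-free property, and spell out only the direct-limit argument in the few lines it requires.
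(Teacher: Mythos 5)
Your overall skeleton matches the paper's: closure under direct products is part of the cosilting hypothesis, closure under (pure) submodules is Lemma \ref{closure_prop}(2), and the only real work is closure under direct limits, attacked via the pure-exact presentation $0\to K\to\bigoplus_i M_i\to\varinjlim M_i\to 0$ and the pure-injectivity of $T$ supplied by Theorem \ref{cosi-pi}. The gap is in how you conclude that $B=\varinjlim M_i$ lies in $\Cogen(T)$. Your biduality diagram chase does not close: writing $Y=\bigoplus_i M_i$ and $\Delta=\Hom_R(-,T)$, pure-injectivity gives exactness of $0\to\Delta(B)\to\Delta(Y)\to\Delta(K)\to 0$, and applying $\Delta$ again gives $0\to\Delta^2(K)\to\Delta^2(Y)\to\Delta^2(B)$ exact; the Snake Lemma then yields only an embedding of $\Ker(\delta_B)$ into $\Coker(\delta_K)$ (since $\Ker(\delta_Y)=0$), and there is no reason for $\delta_K$ to be surjective, i.e.\ for $K$ to be $T$-reflexive. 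This is the usual obstruction: $\Cogen(T)$ is not closed under epimorphic images, and $B$ is a quotient of $Y$. Your fallback "general theorem" --- that $\Cogen(U)\cap{}^{\perp}U$ is definable for pure-injective $U$ once it is a torsion-free class --- is not something you can cite; pure-injectivity only gives $B\in{}^{\perp}T$, and ${}^{\perp}T\not\subseteq\Cogen(T)$ in general (see Example \ref{si.vs.ti}(c)), so this route hides exactly the step that is missing.

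The missing ingredient is Lemma \ref{last_term}(5): if $0\to A\to B'\to X\to 0$ is exact with $A,B'\in\CB_{\zeta}$ and $X\in{}^{\perp}T$, then $X\in\CB_{\zeta}$. You already have everything needed to invoke it: $K,Y\in\CB_{\zeta}$ by submodule and direct-sum closure, and $B\in{}^{\perp}T$ because $\Hom_R(Y,T)\to\Hom_R(K,T)$ is surjective (purity of the embedding plus pure-injectivity of $T$) while $\Ext^1_R(Y,T)=0$. Replacing your biduality chase by this application of Lemma \ref{last_term}(5) is precisely the paper's proof; note that it works at the level of factorizations through $\zeta$ rather than through the $T$-bidual, which is why it avoids the reflexivity of $K$ altogether.
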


\begin{proof}
It is enough to prove that $\CB_{\zeta}$ is closed under direct limits. If $B=\underrightarrow{\lim}_{i\in I}B_i$ such that
all $B_i$ are modules from $\CB_{\zeta}$, then there exists a pure exact sequence
$$0\to L\to \bigoplus_{i\in I} B_i\to B\to 0$$
by \cite[33.9(2)]{Wisbauer:1991}.  Since $T$ is pure injective, it follows that $B\in {^{\perp}T}$.
By Lemma \ref{last_term}(5) we obtain $B\in \CB_\zeta$, and the proof is complete.
\end{proof}

\section*{Acknowledgment} We would like to thank the referee for his/her suggestions and comments to improve this article.

\end{document}